\newtheorem{observacion}{Remark}
\definecolor{myred}{rgb}{1,0,0}
\newtheorem{theorem}{Theorem}
\theoremstyle{remark} \newtheorem{remark}{Remark}}
  \def\({}%
  \def\){}%
  \def\\{}%
  \def\infty{\042\036}%
\newcommand \red {\color{red}}
\newcommand\diag{\mathrm{diag}}
\numberwithin{equation}{section}
\newcommand\showfigures[1]{}
\newcommand \be { \begin{equation}}
\newcommand \ee { \end{equation}}
\begin{document}

\title{Steady States and Well-balanced Schemes for Shallow Water Moment Equations with Topography
%\newline
%2000\textit{\ AMS Subject Classification.}  
% Primary:. Secondary:
\newline
\textit{}
%
%This version: 

\author{
Julian Koellermeier\thanks{Department of Computer Science, KU Leuven, 3001 Leuven, Belgium, 
    email: {\tt julian.koellermeier@kuleuven.be}},~
Ernesto Pimentel-Garc\'ia\thanks{Dpto. An\'alisis Matem\'atico. Universidad de M\'alaga, 14071 M\'alaga, Spain}
}
\date{\today}
}

\maketitle

\begin{abstract}
In this paper, we investigate steady states of shallow water moment equations including bottom topographies. We derive a new hyperbolic shallow water moment model based on linearized moment equations that allows for a simple assessment of the steady states. After proving hyperbolicity of the new model, the steady states are fully identified. A well-balanced scheme is adopted to the specific structure of the new model and allows to preserve the steady states in numerical simulations. 
\end{abstract} 
{\bf Keywords}: Shallow Water Equations, hyperbolic moment equations, well-balanced, steady states\\

%===============================================================

\setcounter{tocdepth}{2}  
%\tableofcontents   

%===============================================================

\section{Introduction}
\label{sec:Intro}
Applications of shallow flows can be found in many scientific fields, e.g., in hydrodynamics \cite{schijf1953theoretical} or granular flows \cite{craster2009dynamics}. An important class of problems considers changing topographies, for example related to snow avalanches \cite{christen2010ramms} or sediment transport \cite{Garres2020}. The main assumption for the widely used Shallow Water Equations (SWE) is that the horizontal velocity profile is constant along the vertical axis from the bottom to the surface. However, this assumption quickly brakes down for more complex flows that yield velocity variations. This is true in practically all applications of shallow flows and especially in presence of friction terms. But even in typical tsunami or dam break situations, the assumption of constant velocity profiles is often violated, see \cite{Koellermeier2020}. A new model that takes into account horizontal velocity changes over the vertical direction was developed in \cite{kowalski2018moment} based on an expansion of the velocity profile in polynomial basis functions modeling the deviation from a constant velocity profile. The resulting Shallow Water Moment Equations (SWME) are more accurate the more basis functions are considered. Despite the success for simple test cases, the model lacks hyperbolicity, which was studied in detail in \cite{Koellermeier2020}. In the same paper, a new model called Hyperbolic Shallow Water Moment Equations (HWSME) and a second variant called the $\beta$-HSWME were derived. The models are essentially based on a linearization of the original SWME model around linear velocity profiles, i.e., all contributions of coefficients of higher order basis functions are neglected. In \cite{Koellermeier2020} the eigenvalues of these models were analyzed and the first numerical tests confirmed that the models yield similar accuracy as the SWME models with additional guaranteed hyperbolicity. 

While the numerical tests in \cite{kowalski2018moment,Koellermeier2020} included standard friction terms for a Newtonian fluid, only a flat bottom topography was considered. This is obviously a strong simplification and bottom topographies need to be taken into account as has been done for the SWE in a numerical and analytical way, see \cite{alcrudo2001exact,bernetti2008exact,rosatti2010riemann,lefloch2007riemann} and the references therein. In the context of varying bottom topographies, it is paramount to consider steady states of the models because any numerical simulation should be able to exactly preserve steady states when present. Otherwise, numerical solutions starting from steady state initial conditions would lead to numerical artifacts or numerical instabilities. It is therefore important to first study the steady states of the models and then design tailored well-balanced numerical schemes, which means that the schemes preserve those steady states by balancing the topography source term and the numerical flux in the correct way so they cancel out. Since \cite{bermudez1994upwind}, the study and design of well-balanced numerical methods have been very active fields in the last years, see for instance \cite{audusse2004fast,berberich2019high,canestrelli2009well,castro2017well,castro2007well,noelle2006well,rebollo2003family,russo2008high}. In the context of path-conservative methods introduced in \cite{pares2006numerical}, the authors in \cite{castro2008well} and more recently in \cite{castro2020well} developed a strategy to obtain well-balanced high-order numerical methods for systems of balance laws. We will follow this strategy and apply it to a newly derived moment model.

In this paper, we investigate steady states of shallow water moment equations including bottom topographies and use this to derive a new first order and second order well-balanced numerical scheme for a new shallow water moment model. The analysis of the existing SWME, including the hyperbolic versions HSWME and $\beta$-HSWME, shows that steady states are difficult to access analytically and numerically, despite the simple case where the velocity profile is just a linear function of the vertical variable. Knowing about the problematic terms in the existing models, we derive a new model that is valid for small deviations from the constant velocity profile. For this model, we can neglect only the non-linear contributions of the basis coefficients while keeping the linear contributions of all coefficients. The model is thus called Shallow Water Linearized Moment Equations (SWLME). It is surprisingly simple, in the sense that it removes some coupling terms between the equations, but it keeps the overall structure even in the higher order equations. Subsequently, we prove hyperbolicity, analyze the eigenstructure, and show that the model yields more realistic propagation speeds than the previous models, while still being hyperbolic. Most importantly, the model allows for a concise characterization of its steady states with and without topography terms. The characterization of the steady states then allows to derive a potentially high-order well-balanced numerical scheme based on the possible steady states of the new model. We explicitly construct the first order and second order well-balanced scheme in this paper. The numerical schemes are tested extensively with a standard lake-at-rest test case, two subcritical stationary solutions, and a transcritical solution. In the end, we also present a test case comparing the new SWLME to the existing HSWME and $\beta$-HSWME models, to outline the good approximation properties of the new model despite its simplicity. 

The rest of the paper is organized as follows: In Section \ref{sec:vertical} we review the derivation of a vertically resolved shallow flow model that is employed to derive all the shallow water moment models presented in this paper. In the following sections we derive and analyze the standard Shallow Water Equations (SWE) (Section \ref{sec:SWE}), the extended Shallow Water Moment Equations (SWME) (Section \ref{sec:SWME}), and the new Shallow Water Linearized Moment Equations (SWLME) (Section \ref{sec:SWLME}) including their hyperbolicity, steady states, and Rankine-Hugoniot conditions including bottom topography. In Section \ref{sec:numerics}, we develop a first order and second order well-balanced numerical scheme for the special case of the shallow water models used in this paper. Numerical tests in Section \ref{sec:results} show the preservation of the steady states and allow for a comparison of the new SWLME model with respect to other existing models.

\section{Vertically resolved shallow flow model}
\label{sec:vertical}
In this paper, we are concerned with free-surface water flows in one horizontal direction. 
Modeling of free-surface flows starts with the incompressible Navier-Stokes equations, which model the evolution of the horizontal velocity $u$ in direction $x$ and the vertical velocity $w$ in direction $z$.
\begin{eqnarray}
    \label{e:NSE}
    \partial_x u + \partial_z w &=&0, \\
    \partial_t u + \partial_x u^2 + \partial_z uw &=& - \frac{1}{\rho} \partial_x p + \frac{1}{\rho} \partial_z \sigma_{xz} + g,
\end{eqnarray}
where $\rho$ is the density and $g$ the gravitation constant. The hydrostatic pressure in relation to the vertical position $z$ with respect to the surface $h+b$, where $b$ represents the bottom topography and $h$ is the water height, is given by
\begin{equation}
    p= (h+b - z) \rho g
\end{equation}
and the stress $\sigma_{xz}$ is modeled using the assumption of a Newtonian fluid with dynamic viscosity $\mu$, i.e.,
\begin{equation}
    \sigma_{xz}= \mu \partial_z u
\end{equation}
to close the system.

To allow for a more efficient representation of the horizontal velocity variation along the vertical axis, a mapping is introduced in \cite{kowalski2018moment}. This mapping shifts and scales the vertical variable, which is defined between the bottom at $z=b$ and the surface at $z=h+b$ according to the following transformation
\begin{equation}
    \label{e:mapping}
    \zeta = \frac{z - b}{h},
\end{equation}
where the denominator is precisely the water height $h$. The variable $\zeta$ is thus defined within the interval $[0,1]$.
According to the derivation in \cite{kowalski2018moment}, the following vertically-resolved system for the simulation of shallow flows is derived using the mapping from \eqref{e:mapping}
\begin{eqnarray}
    \partial_t h + \partial_x h u_m &=&0, \label{e:vertical1}\\
    \partial_t hu + \partial_x \left( hu^2 + \frac{g}{2} h^2 \right) + \partial_{\zeta} \left( hu \omega - \frac{1}{\rho} \sigma_{xz} \right) &=& -gh \partial_x b,\label{e:vertical2}
\end{eqnarray}
where $u_m$ is the mean velocity over the vertical $\zeta$-axis and the so-called vertical coupling term $\omega$ is given by
\begin{equation}
    \omega = \int_{0}^{\zeta} \left( \int_{0}^{1} \partial_x(hu)\left(\hat{\zeta}\right) \, d\hat{\zeta} - \partial_x(hu)\left(\widetilde{\zeta}\right) \right) \, d\widetilde{\zeta}.
\end{equation}

The following boundary conditions in the $\zeta$-direction are used:
\begin{eqnarray}
    \partial_{\zeta} u |_{\zeta=1} &=& 0,\\
    \partial_{\zeta} u |_{\zeta=0} &=& \frac{h}{\lambda} u|_{\zeta=0},
\end{eqnarray}
modeling a stress-free top surface and a slip condition at the bottom with slip length $\lambda$, see \cite{kowalski2018moment} for more details.

The system \eqref{e:vertical1}-\eqref{e:vertical2} is called \emph{vertically resolved system} in \cite{kowalski2018moment} as it includes the dependence on the vertical variable $\zeta$. This system is at the core of this paper as all the models are derived directly from it.

%\todo{Hi both. There is some confusion with the use of $H$ and $h_b$ in the next sections. I think it is better to use only one of them, either $h_b$ or $H$. Moreover, in Section 3, $H$ is not the bottom topography, is the bathymetry measured from a given reference. If $H(x)$ is used the source term in the momentum equation reads to $gh \partial_x H$. If $h_b$ is used instead of $H$ then the source term reads $-gh\partial_x h_b$.
%In Section 4 you use again $h_b$, but in eq. 4.18 $H$ is used. Note that if $H$ is used then top is $z=h-H$ and bottom is $z=-H$ (check this in page 7). I think, it is better in this framework to use $z=b$ the bottom topography and $z=h+b$ the free surface. I propose to change $h_b$ by $b$ and replace $H$ in all the places by $b$ checking the sign. What do you think?}

%In this section we first recall the Shallow Water Equations (SWE). Then we discuss the Shallow Water Moment Equations (SWME) before covering new Hyperbolic Shallow Water Moment Equations (nHSWME). For all models, we will give the equations in conservative and non-conservative form and then discuss eigenvalues, Rankine-Hugoniot conditions and smooth steady state conditions. All models include bottom topography terms and a simple Newton friction model as example.

\section{Shallow Water Equations}
\label{sec:SWE}

Similar to the work in \cite{kowalski2018moment}, we will start with the simple Shallow Water Equations (SWE), which assume a constant velocity $u(t,x,\zeta) = u_m(t,x)$ over the whole vertical axis $\zeta$, see Figure \ref{fig:const_profile}. The dependency on the vertical variable $\zeta$ is then resolved by integrating over $\zeta \in [0,1]$ and using the constant velocity $u(t,x,\zeta) = u_m(t,x)$. It was shown in \cite{kowalski2018moment} that the vertically resolved system \eqref{e:vertical1}-\eqref{e:vertical2} then simplifies to the following set of equations called Shallow Water Equations (SWE)
\begin{equation}
    \partial_t
    \begin{pmatrix}
    h\\
    h u_m\\
    \end{pmatrix} +\partial_x
    \begin{pmatrix}
    h u_m\\
    h u_m^2 + \frac{1}{2}g h^2 \\
    \end{pmatrix} =
    \begin{pmatrix}
    0\\
     -g h \partial_x b\\
    \end{pmatrix}
    -\frac{\nu}{\lambda}
   \begin{pmatrix}
    0\\
    u_m\\
    \end{pmatrix}
\end{equation}

\begin{figure}[h]
		\begin{subfigure}{0.5\textwidth}
		    \centering
			\includegraphics[width=0.35\textwidth]{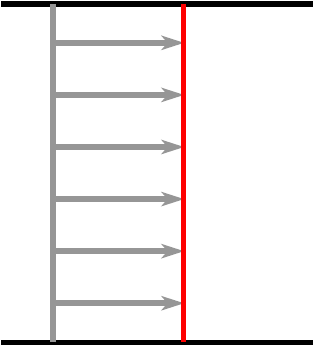}
			\caption{Constant velocity profile}
		    \label{fig:const_profile}
		\end{subfigure}
		\begin{subfigure}{0.5\textwidth}
		    \centering
			\includegraphics[width=0.35\textwidth]{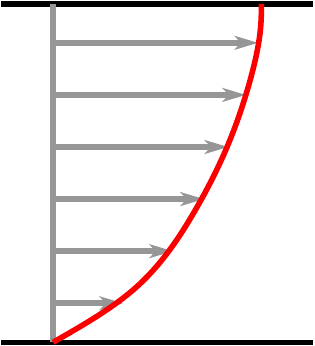}
			\caption{Varying velocity profile}
		    \label{fig:var_profile}
		\end{subfigure}
		\caption{Constant velocity ansatz of SWE model (a) and variable velocity ansatz of SWME model (b).}
		\label{fig:vel_profile}
\end{figure}
where $u_m=u_m(t,x)$ is the horizontal water velocity, $h=h(t,x)$ is the water height, $g$ is the gravitational constant (we later set it to $g=1$ in our simulations) the known function $b(x)$ is the bottom topography, and $\nu$ and $\lambda$ are the kinematic viscosity and the slip length, respectively.

In non-conservative matrix form, the model can be written as 
\begin{equation}
    \partial_t
    \begin{pmatrix}
    h\\
    h u_m\\
    \end{pmatrix} +
    \begin{pmatrix}
     0 & 1 \\
    -u_m^2 + g h & 2 u_m
    \end{pmatrix} \partial_x \begin{pmatrix}
    h\\
    h u_m\\
    \end{pmatrix}= 
   \begin{pmatrix}
    0\\
    -g h \partial_x b\\
    \end{pmatrix}
    -\frac{\nu}{\lambda}
   \begin{pmatrix}
    0\\
    u_m\\
    \end{pmatrix}.
\end{equation}

The eigenvalues of the left hand side transport matrix are the standard propagation speeds of the Shallow Water Equations
\begin{equation}
    \lambda_{1,2}=u_m \pm \sqrt{g h}.
\end{equation}

For flat bottom $\partial_x b=0$ and zero friction, the steady state fulfils
\begin{eqnarray}
    \partial_x \left(h u_m\right) &=& 0, \label{e:mass_eqn}\\
    \partial_x \left( h u_m^2+\frac{1}{2}g h^2\right) &=& 0,
\end{eqnarray}
so that the jump conditions (also called Rankine-Hugoniot conditions) from a given state $\left( h_0, h_0 u_{m,0} \right)$ to a state $\left( h, h u_m \right)$ can be derived by solving the system 
\begin{eqnarray}
    h u_m &=& h_0u_{m,0},\\
    h u_m^2+\frac{1}{2}g h^2 &=& h_0 u_{m,0}^2+\frac{1}{2}g h_0^2,
\end{eqnarray}
for which the solution is 
\begin{equation}
    \left(\frac{h}{h_0}\right) = -\frac{1}{2} + \frac{1}{2} \cdot \sqrt{1 + 8 Fr^2},
\end{equation}
where $Fr$ is the Froude number for the given state defined by 
\begin{equation}
    Fr = \frac{u_{m,0}}{\sqrt{g h_0}}.
\end{equation}

For a smooth frictionless flow including a bottom topography, the steady state momentum equation can be modified using the mass equation \eqref{e:mass_eqn} to
\begin{equation}
    \partial_x \left( \frac{1}{2}u_m^2 + g(h+b) \right) = 0.
\end{equation}
The steady state solution can thus be found using
\begin{eqnarray}
    h u_m &=& const,\\
    \frac{1}{2}u_m^2 + g(h+b) &=& const.
\end{eqnarray}
%\begin{eqnarray}
%    h u &=& h_0u_0 = const,\\
%    \frac{1}{2}u^2 + g(h-H) &=& \frac{1}{2}u_0^2 + g(h_0-H) = const,
%\end{eqnarray}

The SWE are widely used in simulations of water flows. However, the main deficiency is that the horizontal velocity $u$ is constant over the height by assumption. The model is thus not able to predict more complex flow phenomena.

\section{Shallow Water Moment Equations }%\texorpdfstring{$N=1$}{}}
\label{sec:SWME}
For the Shallow Water Moment Equations (SWME) derived in \cite{kowalski2018moment}, the idea is to allow for a vertical variation of the water velocity profile. This is done by assuming the following ansatz for the velocity profile, see Figure \ref{fig:var_profile}:
\begin{equation}\label{expansion}
    u(t,x,\zeta)=u_m(t,x)+\sum_{j=1}^{N}\alpha_j(t,x)\phi_j(\zeta),
\end{equation}
where $u_m(t,x)$ is the mean horizontal velocity also used in the SWE in Section \ref{sec:SWE}, $\zeta$ is the scaled vertical coordinate \eqref{e:mapping}, $\alpha_j$ are coefficients, and $\phi_j$ are Legendre ansatz functions for $j=1, \ldots, N$ defined by
\begin{equation} \label{eq:defphi}
  \phi_j(\zeta) = \frac{1}{j!} \frac{d^j}{d\zeta^j} (\zeta - \zeta^2)^j.
\end{equation}
Note that the larger $N$, the more variation is allowed in vertical direction. Furthermore, the ansatz functions form a group of orthogonal basis functions as \cite{kowalski2018moment} 
$$
\int_0^1 \phi_m \phi_n d\zeta = \frac{1}{2n+1} \delta_{mn},
$$
with Kronecker delta $\delta_{m,n}$. 

The initial values for $u_m$ and $\alpha_j$ for $j=1,\ldots, N$ can be computed from some initial velocity profile $u(0,x,\zeta) = u_0(x,\zeta)$ by projecting the initial velocity profile to the basis functions $\phi_j$.
\begin{eqnarray}
    u_0(x,\zeta) &=& u_m(0,x)+\sum_{j=1}^{N}\alpha_j(0,x)\phi_j(\zeta) \\%\quad\quad\quad\quad|\int_0^1 \,\cdot \,\,\phi_i (\zeta) \, d\zeta\\
    \int_0^1 u_0(x,\zeta) \phi_i (\zeta) \, d\zeta &=& \int_0^1 \left( u_m(0,x)+\sum_{j=1}^{N}\alpha_j(0,x)\phi_j(\zeta) \right) \phi_i (\zeta) \, d\zeta \\
    &=& u_m(0,x) \delta_{i,0} + \sum_{j=1}^{N}\alpha_j(0,x) \frac{1}{2i+1} \delta_{i,j},
\end{eqnarray}
which leads to the initial mean and coefficients
\begin{eqnarray}
    u_m(0,x) &=& \int_0^1 u_0(x,\zeta) \, d\zeta, \label{e:IC_u}\\
    \alpha_i(0,x) &=& (2i+1) )\int_0^1 u_0(x,\zeta) \phi_i (\zeta) \, d\zeta \text{ for } i=1,\ldots,N.\label{e:IC_alpha}
\end{eqnarray}

The model for the evolution of the coefficients for arbitrary $N$ can be derived by inserting the ansatz \eqref{expansion} into the vertically resolved system \eqref{e:vertical1}-\eqref{e:vertical2} and integrating over $\zeta \in [0,1]$. According to \cite{kowalski2018moment} this leads to 
\begin{eqnarray}
    \partial_t h + \partial_x h u_m \hspace{-0.3cm}&=&\hspace{-0.3cm}0, \\
    \partial_t h u_m + \partial_x \left( h u_m^2 + h \sum_{j=1}^{N} \frac{\alpha_j^2}{2j+1} + \frac{g}{2} h^2 \right) \hspace{-0.3cm}&=&\hspace{-0.3cm}  -\frac{\nu}{\lambda} \left( u_m + \sum_{j=1}^{N} \alpha_j \right) - g h \partial_x b, \\
    \partial_t h \alpha_i + \partial_x \left( 2h u_m \alpha_i  + h \sum_{j,k=1}^{N} A_{ijk} \alpha_j \alpha_k \right) \hspace{-0.3cm}&=&\hspace{-0.3cm} u_m \partial_x h\alpha_i - \sum_{j,k=1}^{N} B_{ijk} \partial_x (h\alpha_j) \alpha_k \\
    &&\hspace{-0.3cm}- (2i+1)\frac{\nu}{\lambda} \left( u_m + \sum_{j=1}^{N} \left( 1 + \frac{\lambda}{h} C_{ij} \right) \alpha_j \right), 
\end{eqnarray}
for $i=1,\ldots,N$, the unknown variables $\left(h,u,\alpha_1, \ldots, \alpha_N \right)$ and 
\begin{equation}
    A_{ijk} = (2i+1) \int_{0}^{1} \phi_i \phi_j \phi_k \,d\zeta,
 \end{equation}
\begin{equation}
    B_{ijk} = (2i+1) \int_{0}^{1} \partial_{\zeta} \phi_i \left( \int_{0}^{\zeta} \phi_j \, d\hat{\zeta} \right) \phi_k \,d\zeta,
\end{equation}
\begin{equation}
    C_{ij} = \int_{0}^{1} \partial_{\zeta} \phi_i \partial_{\zeta} \phi_j \,d\zeta.
\end{equation}

The model can be written in closed form with the precomputed terms $A_{ijk}, B_{ijk}, C_{ij}$ for large $N$. We then write it as
\begin{equation}\label{SWME_arbitrary}
    \partial_t W + \frac{\partial F}{\partial W} \partial_x W = Q \partial_x W + P,
\end{equation}
with variables $W = \left(h, h u_m, h\alpha_1, \ldots, h\alpha_N \right)^T \in \mathbb{R}^{N+2}$, the flux Jacobian (also called conservative matrix) $\frac{\partial F}{\partial W}$ given by
\begin{equation*}
    \frac{\partial F}{\partial W}=\begin{pmatrix}
    0 & 1 & 0 & \hdots & 0 \\
    g h -u_m^2 - \displaystyle\sum_{i=1}^N \frac{\alpha_i}{2i+1} & 2u_m & \frac{2\alpha_1}{2\cdot 1 +1} & \hdots & \frac{2\alpha_N}{2N+1} \\
    -2u_m\alpha_1 - \displaystyle\sum_{j,k=1}^N A_{1jk} \alpha_j \alpha_k & 2\alpha_1 & 2 u_m \delta_{11} + 2 \displaystyle\sum_{k=1}^N A_{11k} \alpha_k & \hdots & 2 u_m \delta_{1N} + 2 \displaystyle\sum_{k=1}^N A_{1Nk} \alpha_k\\
    \vdots & \vdots & \vdots & \ddots & \vdots\\
    -2u_m\alpha_N - \displaystyle\sum_{j,k=1}^N A_{Njk} \alpha_j \alpha_k & 2\alpha_N & 2 u_m \delta_{NN} + 2 \displaystyle\sum_{k=1}^N A_{N1k} \alpha_k & \hdots & 2 u_m \delta_{NN} + 2 \displaystyle\sum_{k=1}^N A_{NNk} \alpha_k
    \end{pmatrix},
\end{equation*}
and right-hand side non-conservative matrix $Q$ 
\begin{equation*}
    Q=\begin{pmatrix}
    0 & 0 & 0 & \hdots & 0 \\
    0 & 0 & 0 & \hdots & 0 \\
    0 & 0 & u_m \delta_{11} + \displaystyle\sum_{k=1}^N B_{11k} \alpha_k & \hdots & u_m \delta_{1N} + \displaystyle\sum_{k=1}^N B_{1Nk} \alpha_k\\
    \vdots & \vdots & \vdots & \ddots & \vdots\\
    0 & 0 & u_m \delta_{N1} + \displaystyle\sum_{k=1}^N B_{N1k} \alpha_k & \hdots & u_m \delta_{NN} + \displaystyle\sum_{k=1}^N B_{NNk} \alpha_k\\
    \end{pmatrix},
\end{equation*}
with Kronecker delta $\delta_{ij}$. The friction term $P$ on the right-hand side is defined in \cite{kowalski2018moment} as first entry $P_0 = 0$ and 
\begin{equation}
P_i = -\left(2i+1\right) \frac{\nu}{\lambda} \left( u_m + \sum_{j=1}^N \left( 1 + \frac{\lambda}{h} C_{ij} \right) \alpha_j \right), i=1,\ldots,N+1.
\end{equation}
The friction term can be given explicitly as
\begin{equation}
P_i = -\frac{\nu}{\lambda} \left(2i+1\right) \left( u_m + \sum_{j=1}^N \alpha_j \right)  -\frac{\nu}{h} 4 \left(2i+1\right) \sum_{j=1}^N a_{i,j} \alpha_j, i=1,\ldots,N+1,
\end{equation}
where the constants $a_{i,j}$ are computed by
\begin{equation}
a_{i,j}=
\begin{cases}
0 \hspace{2.8cm} \quad\quad \textrm{ if } i+j =  \textrm{even},\\
\frac{\min(i-1,j) \left( \min(i-1,j) +1\right)}{2} \quad \textrm{ if } i+j =  \textrm{odd}.
\end{cases}
\end{equation}
Note that the right-hand side friction term can become quite stiff for large $N$, even though the friction coefficients $\lambda, \nu$ can be of order 1. This should be accounted for by appropriate numerical methods, e.g. Projective Integration \cite{Gear2003ProjectiveMF,Lafitte2010}. For most of this work, we will neglect the friction terms but consider non-zero topography changes $\partial_x b$.

All the models covered in this paper use the form \eqref{SWME_arbitrary} for different simplifications of the conservative and non-conservative matrix. 

We first consider the case $N=1$, also called the first order system. This model is described in \cite{kowalski2018moment} and \cite{Koellermeier2020}. The velocity profile is then given depending on the mean velocity $u_m$ and the coefficient $\alpha=\alpha_1$ as
\begin{equation}
    u(t,x,z) = = u_m(t,x) + \left( 1 - 2 \frac{z-b}{h}\right) \alpha(t,x).
\end{equation}

Note that the two values for the velocity at the top ($z=b+h$) and at the bottom ($z=b$) are given by
\begin{eqnarray}
    u(z=b+h) &=& u_m - \alpha, \\
    u(z=b) &=& u_m + \alpha. 
\end{eqnarray}
It seems reasonable, to require $u(z)$ to have the same sign over the whole velocity profile. Otherwise, the flow can no longer be approximated by means of a \emph{shallow} model assumption, as a vortex could form. Thus we require in this paper
\begin{equation}\label{e:assumption}
   |\alpha(t,x)| \leq |u(t,x)|.
\end{equation}
Compare Figure \ref{fig:switch_profiles}.

\begin{figure}[h]
		\begin{subfigure}{0.5\textwidth}
		    \centering
			\includegraphics[width=0.4\textwidth]{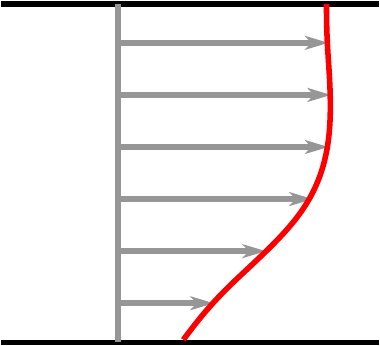}
			\caption{no change of sign}
		    \label{fig:no_change_of_sign}
		\end{subfigure}
		\begin{subfigure}{0.5\textwidth}
		    \centering
			\includegraphics[width=0.4\textwidth]{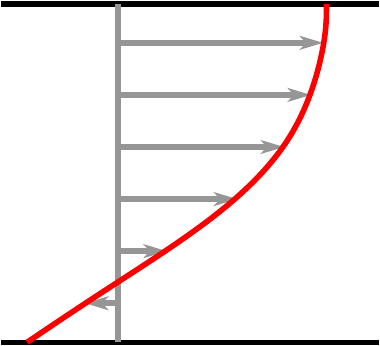}
			\caption{change of sign}
		    \label{fig:change_of_sign}
		\end{subfigure}
        \caption{Velocity profile without change of sign (a) and with change of sign (b). We only consider velocity profiles without change of sign in this paper.}
        \label{fig:switch_profiles}
\end{figure}

Choosing this linear velocity change with the vertical variable, the first order shallow water moment model reads \cite{kowalski2018moment}
\begin{equation}
    \partial_t
    \begin{pmatrix}
    h\\
    h u_m\\
    h\alpha_1\\
    \end{pmatrix} +\partial_x
    \begin{pmatrix}
    h u_m\\
    h u_m^2+\frac{1}{2}g h^2 +\frac{1}{3}h\alpha_1^2\\
    2h u_m\alpha_1
    \end{pmatrix} =Q\partial_x
    \begin{pmatrix}
    h\\
    h u_m\\
    h\alpha_1
    \end{pmatrix}-
    \begin{pmatrix}
    0\\
    gh \partial_x b\\
    0
    \end{pmatrix}
    -\frac{\nu}{\lambda}
    P,
    \label{firstordersystem}
\end{equation}
with
\begin{equation*}
    Q=\begin{pmatrix}
    0&0&0\\
    0&0&0\\
    0&0&u_m
    \end{pmatrix}, P=\begin{pmatrix}
    0\\
    u_m+\alpha_1\\
    3\left(u_m+\alpha_1+4\frac{\lambda}{h}\alpha_1\right)
    \end{pmatrix} \text{ and Jacobian }
    \frac{\partial F}{\partial V}=\begin{pmatrix}
     0 & 1 & 0 \\
    -u_m^2-\frac{\alpha_1^2}{3} +g h & 2 u_m & \frac{2\alpha_1}{3} \\
    -2 u_m \alpha_1 & 2 \alpha_1 & 2 u_m
    \end{pmatrix},
\end{equation*}
leading to the system matrix
\begin{equation}\label{AmatrixN1}
    A=\frac{\partial F}{\partial V}-Q=\begin{pmatrix}
     0 & 1 & 0 \\
    -u_m^2-\frac{\alpha_1^2}{3} +g h & 2 u_m & \frac{2\alpha_1}{3} \\
    -2 u_m \alpha_1 & 2 \alpha_1 & u_m
    \end{pmatrix}.\\
\end{equation}

The first order system has the distinct real eigenvalues
\begin{equation}
    \lambda_{1,2}=u_m\pm \sqrt{g h+\alpha_1^2}\, \textrm{ and } \lambda_3=u_m.
    \label{e:EV_SWMEN1}
\end{equation}
For positive water height $h>0$, the first order shallow water moment model is hyperbolic.\\

So far, there has been no analysis of the first order system except for the eigenvalues in \cite{Koellermeier2020,kowalski2018moment}. In this paper, we investigate the steady state of the model.

For flat bottom $\partial_x b=0$ and zero friction, the steady state fulfills
\begin{eqnarray}
    \partial_x \left(h u_m\right) &=& 0,\\
    \partial_x \left( h u_m^2+\frac{1}{2}g h^2 + \frac{1}{3}h\alpha^2 \right) &=& 0,\\
    \partial_x \left( 2h u_m\alpha \right) &=& u_m \partial_x \left( h\alpha \right),
\end{eqnarray}
From the first and last equation, we obtain after some modification
\begin{eqnarray}
    h u_m &=& const,\\
    u_m=0 ~\textrm{ or }~ \frac{\alpha}{h} &=& const.
\end{eqnarray}
Using those relations in the remaining second equation, we can derive the Rankine-Hugoniot conditions from a given state $\left( h_0, h_0 u_{m,0}, h_0\alpha_0 \right)$ to a state $\left( h, h u_m, h\alpha \right)$ and obtain (after some modifications)
\begin{equation}
    (h-h_0)\left[ -\frac{u_{m,0}^2}{g h_0} + \frac{1}{2}\left(\left(\frac{h}{h_0}\right)^2+ \left(\frac{h}{h_0}\right)\right) + \frac{1}{3}\frac{\alpha_0^2}{g h_0} \left( \left(\frac{h}{h_0}\right)^3+ \left(\frac{h}{h_0}\right)^2 + \left(\frac{h}{h_0}\right) \right) \right] = 0.
\end{equation}
We now use the following dimensionless flow numbers:
\begin{eqnarray}
    Fr = \frac{u_{m,0}}{\sqrt{g h_0}},\\
    M\alpha = \frac{\alpha_0}{u_{m,0}},
\end{eqnarray}
and write $y=\frac{h}{h_0}$ to arrive at the two solutions
\begin{equation}\label{e:RKN1}
    h=h_0 \quad \vee \quad -Fr^2 + \frac{1}{2}\left(y^2+ y\right) + \frac{1}{3} {M \hspace{-0.05cm}\alpha}^2 Fr^2 \left( y^3+ y^2 + y \right) = 0.
\end{equation}
That means that the jump conditions for the SWME with $N=1$ lead to a third order polynomial with two parameters which are the flow numbers $Fr$ and $M\alpha$, a consistent extension from the standard case of the Shallow Water Equations. The new parameter $M\alpha$ measures how far away the flow is from the standard shallow water model. For $M\alpha=0$, the shallow water equations are recovered with a constant velocity profile, whereas for $|M\alpha|=1$, the flow velocity is changing the most along the $z$-axis. For values $|M\alpha|>1$, the assumption \eqref{e:assumption} is no longer fulfilled and the model assumption of a shallow flow is not valid any more.

Note that the third order polynomial in \eqref{e:RKN1} always has at least one real zero.\\

For a smooth frictionless flow including a bottom topography, the steady state momentum equation can be modified using the mass equation to
\begin{equation}
    \partial_x \left( \frac{1}{2}u_m^2 + g(h+b) + \frac{1}{2}\alpha^2\right) = 0.
\end{equation}
The non-trivial steady state solution can thus be found using
\begin{eqnarray}
    h u_m &=& const,\\
    \frac{1}{2}u_m^2 + g(h+b)  + \frac{1}{2}\alpha^2 &=& const,\\
    \frac{\alpha}{h} &=& const.
\end{eqnarray}

In Section \ref{sec:numerics}, we will use this form of the non-trivial steady state solution to preserve steady states within the numerical scheme. 
%It is possible to look into this equation in more detail and derive conditions for general values, using the flow numbers from above. This might lead to new insights. Also, inserting $u,\alpha$ into the second equation using right-hand side constants, a new fourth order polynomial need to be solved to find the admissible states. This will be done in the numerical scheme for well-balancing later.

Unfortunately, it is not possible to extend the study of steady states of the SWME form $N=1$ to $N>1$. The first problem is that the SWME loose hyperbolicity for $N>1$ as analyzed in detail in \cite{Koellermeier2020}. Hyperbolicity is a mathematical requirement for first order partial differential equations to be robust against small perturbations of the initial data, a key property of the real-world physical processes \cite{Serre1999}. %Second, the complex form of the SWME prevents us from deriving closed form Rankine-Hugoniot relations or steady states including bottom topographies.
%For $N\geq2$, the SWME loose hyperbolicity \cite{Koellermeier2020}. 
The model is only hyperbolic for certain states depending on the values of the coefficients $\alpha_i$. As one example, consider the case $N=2$. This so-called second order moment model is given by
\begin{equation}\label{e:SWME_N2}
    \partial_t
    \begin{pmatrix}
    h\\
    h u_m\\
    h\alpha_1\\
    h\alpha_2
    \end{pmatrix} +\partial_x
    \begin{pmatrix}
    h u_m \\
    h u_m^2+g\frac{h^2}{2} +\frac{1}{3}h\alpha_1^2+\frac{1}{5}h\alpha_2^2\\
    2h u_m\alpha_1+\frac{4}{5}h\alpha_1\alpha_2\\
    2h u_m\alpha_2 + \frac{2}{3}h\alpha_1^2+\frac{2}{7}h\alpha_2^2
    \end{pmatrix} =Q\partial_x
    \begin{pmatrix}
    h\\
    h u_m\\
    h\alpha_1\\
    h\alpha_2
    \end{pmatrix}-
    \begin{pmatrix}
    0\\
    g h \partial_x b\\
    0\\
    0
    \end{pmatrix}
    -\frac{\nu}{\lambda}
    P
\end{equation}
with
\begin{equation*}
    Q=\begin{pmatrix}
    0&0&0&0\\
    0&0&0&0\\
    0&0&u_m-\frac{\alpha_2}{5}&\frac{\alpha_1}{5}\\
    0&0&\alpha_1&u_m+\frac{\alpha_2}{7}
    \end{pmatrix}
    \text{ and } P=\begin{pmatrix}
    0\\
    u_m+\alpha_1+\alpha_2\\
    3\left(u_m+\alpha_1+\alpha_2+4\frac{\lambda}{h}\alpha_1\right)\\
    5\left(u_m+\alpha_1+\alpha_2 +12\frac{\lambda}{h}\alpha_2\right)
    \end{pmatrix}.
\end{equation*}
Where the two coefficients are now $\alpha_1, \alpha_2$.

This leads to the Jacobian 
\begin{equation*}
    \frac{\partial F}{\partial V}=\left(
    \begin{array}{cccc}
     0 & 1 & 0 & 0 \\
     -\frac{\alpha_1^2}{3}-u_m^2+g h-\frac{\alpha_2 ^2}{5} & 2 u_m & \frac{2 \alpha_1}{3} & \frac{2 \alpha_2 }{5} \\
     -2 \alpha_1 u_m-\frac{4}{5}  \alpha_1 \alpha_2 & 2 \alpha_1 & 2 u_m+\frac{4 \alpha_2 }{5} & \frac{4 \alpha_1}{5} \\
      -\frac{2}{3} \alpha_1^2-2 \alpha_2 u_m - \frac{2}{7} \alpha_2^2  & 2 \alpha_2  & \frac{4 \alpha_1}{3} & 2 u_m+\frac{4 \alpha_2 }{7} \\
    \end{array}
    \right)
\end{equation*}
and the full system matrix reads 
\begin{equation}
    \label{2DSystemMatrix}
    A=\frac{\partial F}{\partial V}-Q=\begin{pmatrix}
    0&1&0&0\\
    -\frac{\alpha_1^2}{3}-u_m^2+g h-\frac{\alpha_2 ^2}{5} & 2 u_m & \frac{2 \alpha_1}{3} & \frac{2 \alpha_2 }{5} \\
    -2 \alpha_1 u_m-\frac{4}{5}  \alpha_1 \alpha_2&2\alpha_1&u_m+\alpha_2 &\frac{3\alpha_1}{5}\\
    -\frac{2}{3}\alpha_1^2-2u_m\alpha_2 -\frac{2}{7}\alpha_2^2&2\alpha_2 &-\frac{\alpha_1}{3}&u_m+\frac{3\alpha_2 }{7}
    \end{pmatrix}.
\end{equation}

However, the system is not hyperbolic and the non-hyperbolic regions are clearly shown in Figure \ref{fig:hypN2}. In particular, the eigenvalues depend on $\alpha_1$ and $\alpha_2$. It was shown in \cite{Koellermeier2020} that the non-hyperbolic regions can be reached in standard simulations which makes the SWME models with $N>1$ prone to stability problems.
\begin{figure}[htbp]
    \centering
        \includegraphics[width=0.4 \textwidth]{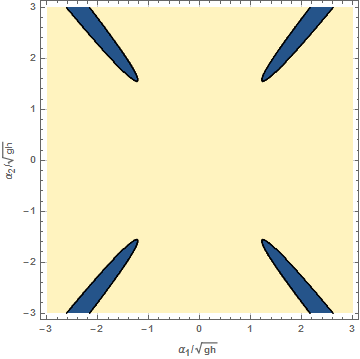}
        \caption{Non-hyperbolic region of $N=2$ model in blue, from \cite{Koellermeier2020}.}
        \label{fig:hypN2}
\end{figure}

There are several hyperbolic regularization of the SWME with arbitrary $N$ that restore hyperbolicity and yield more stable solutions while achieving similar accuracy as the original model. For more details, see \cite{Koellermeier2020}. However, it is very difficult to investigate the steady states for these models as the number of non-conservative terms is large. At the same time, those models do not depend on the higher order coefficients $\alpha_i$ any more, which leads to a drastic simplification.

\section{Shallow Water Linearized Moment Equations}
\label{sec:SWLME}
In the previous section we have seen that the general SWME lacks hyperbolicity and a proper analysis of steady states is difficult due to the non-conservative terms. Note that even the hyperbolic HSWME model and the related $\beta$-HSWME model in \cite{Koellermeier2020} pose the same problems for computing the steady states. 

In this paper, we propose a new hyperbolic model for the simulation of shallow flows, which is called Shallow Water Linearized Moment Equations (SWLME). Its derivation is based on the insights from the SWME $N=1$ model. We saw that the steady states are easy to obtain as long as there are not that many non-conservative terms in the model and as long as the higher-order equations for the variables $h \alpha_i$ are not too complicated. 

The difficult expressions in the higher-order equations are obtained by the non-linear terms $\partial_x \left( hu^2 \right)$ and $\partial_{\zeta} \left( hu \omega \right)$ in the vertically-resolved system \eqref{e:vertical1}-\eqref{e:vertical2},
%\begin{itemize}
%    \item[1.] $\partial_x \left( hu^2 \right)$,
%    \item[2.] $\partial_{\zeta} \left( hu \omega \right)$,
%\end{itemize}
which require the computation of the following terms after insertion of the ansatz \eqref{expansion}
\begin{equation*}
    \int_{0}^{1} \phi_i u^2 \, d\zeta \quad\text{ and }\quad \int_{0}^{1} \phi_i \partial_{\zeta} \left(u \omega\right) \, d\zeta.
\end{equation*}
Following an exact derivation, the first term evaluates to
\begin{eqnarray}
    \int_{0}^{1} \phi_i u^2 \, d\zeta&=& \int_{0}^{1} \phi_i \left(u_m+\sum_{j=1}^{N} \alpha_j \phi_j \right)^2 \, d\zeta \\
    &=& u_m^2 \int_{0}^{1} \phi_i \, d\zeta + \sum_{j=1}^{N} 2 u_m \alpha_j \int_{0}^{1} \phi_i \phi_j \, d\zeta + \sum_{j,k=1}^{N} 2 \alpha_j \alpha_k \int_{0}^{1} \phi_i \phi_j \phi_k \, d\zeta \\
    &=& 0 + \frac{2}{2i+1} u_m \alpha_i+ \frac{1}{2i+1} \sum_{j,k}^{N} A_{ijk} \alpha_j \alpha_k.
\end{eqnarray}
Assuming small deviations from a constant profile, i.e., $\alpha_i = \mathcal{O}\left(\epsilon\right)$ allows for neglecting the last term containing the coefficient coupling $\alpha_j \alpha_k = \mathcal{O}\left(\epsilon^2\right)$. This results in
\begin{equation*}
    \int_{0}^{1} \phi_i u^2 \, d\zeta \approx \frac{2}{2i+1} u_m \alpha_i.
\end{equation*}
The second term exactly evaluates to
\begin{eqnarray}
    \int_{0}^{1} \phi_i \partial_{\zeta} \left(u \omega\right) \, d\zeta &=& -\frac{1}{2i+1} u_m \partial_x (h\alpha_i) + \sum_{j,k}^{N} B_{ijk} \alpha_j \partial_x (h \alpha_k).
\end{eqnarray}
Again assuming small coefficients $\alpha_i = \mathcal{O}\left(\epsilon\right)$ that only change moderately, the last term containing the coefficient coupling $\alpha_j \partial_x (h \alpha_k)$ is neglected. This results in
\begin{equation*}
    \int_{0}^{1} \phi_i \partial_{\zeta} \left(u \omega\right) \, \approx -\frac{1}{2i+1} u_m \partial_x (h\alpha_i).
\end{equation*}
This leads to two changes in the equation system: 
\begin{itemize}
    \item[1.] The left-hand side transport term does no longer include the non-linear couplings between different $\alpha_i$.
    \item[2.] The right-hand side non-conservative term does no longer contain coupling terms between different $\alpha_i$.
\end{itemize}
Due to the linearization, the new model is called Shallow Water Linearized Moment Equations (SWLME).

\begin{remark}
    The linearization procedure outlined for the SWLME is related to the hyperbolic regularization procedure that leads to the so-called Hyperbolic Moment Equations (HME) for rarefied gases in \cite{Cai2013}, which are linearized around the equilibrium point in conservative variables. Another similar linearization was performed in the derivation of the so-called Simplified Hyperbolic Moment Equations (SHME) for rarefied gases in \cite{Koellermeier2016a}, which neglects the non-linearity in the ansatz to derive a hyperbolic but much simpler moment model.
\end{remark}

To see the effect of the changes in practice, consider the simple case $N=2$ that will later be extended for larger $N$. The model reads
\begin{equation*}
    \partial_t
    \begin{pmatrix}
    h\\
    h u_m\\
    h\alpha_1\\
    h\alpha_2
    \end{pmatrix} +\partial_x
    \begin{pmatrix}
    h u \\
    h u_m^2+g\frac{h^2}{2} +\frac{1}{3}h\alpha_1^2+\frac{1}{5}h\alpha_2^2\\
    \red{2h u_m\alpha_1} \\
    \red{2h u_m\alpha_2} 
    \end{pmatrix} ={\red Q}\partial_x
    \begin{pmatrix}
    h\\
    h u\\
    h\alpha_1\\
    h\alpha_2
    \end{pmatrix}
    -\frac{\nu}{\lambda}
    P
\end{equation*}
with
\begin{equation*}
    {\red Q}=\begin{pmatrix}
    0&0&0&0\\
    0&0&0&0\\
    0&0&\red{u_m} & \red{0}\\
    0&0&\red{0}&\red{u_m} 
    \end{pmatrix}
    \text{ and } P=\begin{pmatrix}
    0\\
    u_m+\alpha_1+\alpha_2\\
    3(u_m+\alpha_1+\alpha_2+4\frac{\lambda}{h}\alpha_1)\\
    5(u_m+\alpha_1+\alpha_2 +12\frac{\lambda}{h}\alpha_2)
    \end{pmatrix}.
\end{equation*}
The changed entries are given in red, illustrating the derivation above. While the model looks simpler than the SWME model \eqref{e:SWME_N2}, in comparison with the HSWME from \cite{Koellermeier2020}, the differences are smaller as the HSWME model also neglects the high-order linear terms. Most importantly, the momentum equation, which is the second equation of the model, is exactly recovered by the SWLME and the system matrix $A$ still depends on the second coefficient $\alpha_2$, which is both not the case for the HSWME model. The system matrix is given by
\begin{equation}\label{e:SWLMEN2Matrix}
    A=\begin{pmatrix}
    0&1&0&0\\
    -\frac{\alpha_1^2}{3}-u_m^2+g h-\frac{\alpha_2 ^2}{5} & 2 u_m & \frac{2 \alpha_1}{3} & \frac{2 \alpha_2 }{5} \\
    \red{-2 u_m\alpha_1} &2\alpha_1&\red{u_m} &\red{0}\\
    \red{-2 u_m\alpha_2} &2\alpha_2 &\red{0}&\red{u_m}
    \end{pmatrix}.
\end{equation}
Albeit being a simpler model, the model captures most of the original model, including the conservation of mass and momentum and the dependence of the momentum terms $hu$ on the higher order equations. The second column of the system matrix is not changed at all, leading to the correct momentum influence on the higher order equations. Only the coupling between the higher-order equations, induced by the non-linear parts (e.g. $h\alpha_1\alpha_2$ and $h\alpha_2^2$ and the additional non-conservative terms) is reduced. However, there is still a non-linear velocity and momentum coupling between all higher-order equations.

This procedure can be generalized to an explicit system for arbitrary $N$ following the same strategy. The model equations read:
\begin{equation}\label{hyperbolic_system_N}
    \partial_t
    \begin{pmatrix}
    h\\
    h u_m\\
    h\alpha_1\\
    \vdots\\
    h\alpha_N
    \end{pmatrix} +\partial_x
    \begin{pmatrix}
    h u_m \\
    h u_m^2+g\frac{h^2}{2} +\frac{1}{3}h\alpha_1^2+\ldots+\frac{1}{2N+1}h\alpha_N^2\\
    \red{2h u_m\alpha_1} \\
    \vdots \\
    \red{2h u_m\alpha_N} 
    \end{pmatrix} ={\red Q}\partial_x
    \begin{pmatrix}
    h\\
    h u_m\\
    h\alpha_1\\
    \vdots\\
    h\alpha_N
    \end{pmatrix}+
    P.
\end{equation}

The non-conservative term is simplified to
\begin{equation*}
    {\red Q} = \diag \left(0,0,{\red u_m,\hdots,u_m}\right).
\end{equation*}
The system matrix of the new SWLME then reads
\begin{equation}\label{e:A_nHWSME}
    A_N=\begin{pmatrix}
    0&1&0&\vdots&0\\
    gh -u_m^2-\frac{\alpha_1^2}{3} - \hdots -\frac{\alpha_N ^2}{2N+1} & 2 u_m & \frac{2 \alpha_1}{3} & \hdots & \frac{2 \alpha_N }{2N+1} \\
    \red{-2 u_m\alpha_1} &2\alpha_1&\red{u_m} & &\\
    \red{\vdots} &\vdots & & \red{\ddots} & \\
    \red{-2 u_m\alpha_N} &2\alpha_N & & &\red{u_m}
    \end{pmatrix} \in \mathbb{R}^{(N+2) \times (N+2)}.
\end{equation}
For the model with general $N>2$, the same observations as for the $N=2$ model hold, including the conservation of mass and momentum as well as the exact second column of the system matrix. The coupling between the higher-order equations is reduced, but still present. 

An analysis of the system matrix reveals the following theorem.
\begin{theorem}
    \label{TheoremEVnHSWME}
    The SWLME system matrix $A_N \in \mathbb{R}^{(N+2)\times(N+2)}$ \eqref{e:A_nHWSME} has the following characteristic polynomial
    \begin{equation*}
        \chi_{A_N} (\lambda) = \left(u_m-\lambda \right) \left[(\lambda-u_m)^2 - gh -  \sum_{i=1}^N \frac{3 \alpha_i^2}{2i+1} \right]
    \end{equation*}
    and the eigenvalues are given by
    \begin{equation}\label{SWLME_eigenvalues}
        \lambda_{1,2} = u_m \pm \sqrt{gh + \sum_{i=1}^N \frac{3 \alpha_i^2}{2i+1}} \quad \textrm{ and } \quad \lambda_{i+2} = u, ~\textrm{ for }~ i=1,\ldots,N.
    \end{equation}
    The system is thus hyperbolic.
\end{theorem}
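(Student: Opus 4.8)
The plan is to exploit the bordered-diagonal block structure of $A_N$: after the linearization, the bottom-right $N\times N$ block is the pure scalar matrix $u_m I_N$, so in $A_N-\lambda I$ it becomes $(u_m-\lambda)I_N$. I would write $A_N-\lambda I$ in the block form
\[
\begin{pmatrix} P & R \\ S & T \end{pmatrix},
\]
with $T=(u_m-\lambda)I_N$, top-left block $P=\left(\begin{smallmatrix} -\lambda & 1 \\ c & 2u_m-\lambda \end{smallmatrix}\right)$ where $c=gh-u_m^2-\sum_i \frac{\alpha_i^2}{2i+1}$, off-diagonal block $R$ of size $2\times N$ with vanishing first row and second row $\left(\tfrac{2\alpha_1}{3},\dots,\tfrac{2\alpha_N}{2N+1}\right)$, and $S$ of size $N\times 2$ with columns $(-2u_m\alpha_i)_i$ and $(2\alpha_i)_i$. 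The whole computation then reduces to a single $2\times2$ determinant.

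For $\lambda\neq u_m$ the block $T$ is invertible, so the Schur-complement identity gives $\det(A_N-\lambda I)=\det(T)\,\det\!\left(P-\tfrac{1}{u_m-\lambda}RS\right)=(u_m-\lambda)^N\det\!\left(P-\tfrac{1}{u_m-\lambda}RS\right)$. Because the first row of $R$ is zero, $RS$ has vanishing first row and second row $(-4u_m\Sigma,\ 4\Sigma)$ with $\Sigma:=\sum_i \frac{\alpha_i^2}{2i+1}$; I expect the $2\times2$ determinant to collapse once the cross-term $\frac{4\Sigma(\lambda-u_m)}{u_m-\lambda}$ simplifies to $-4\Sigma$, leaving $(\lambda-u_m)^2-gh-3\Sigma$. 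Combining with the prefactor yields $(u_m-\lambda)^N\left[(\lambda-u_m)^2-gh-\sum_i \frac{3\alpha_i^2}{2i+1}\right]$, i.e.\ the eigenvalue $u_m$ carries multiplicity $N$ as in the stated list. This polynomial identity, proved away from $\lambda=u_m$, extends to $\lambda=u_m$ by continuity since both sides are polynomials. The quadratic factor has the two real, distinct roots $\lambda_{1,2}=u_m\pm\sqrt{gh+\sum_i \frac{3\alpha_i^2}{2i+1}}$, the radicand being strictly positive for $h>0$.

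Real spectrum alone does not give hyperbolicity in the PDE sense, so the final and more delicate step is to confirm that the repeated eigenvalue $u_m$ is semisimple. I would compute $\ker(A_N-u_m I)$ directly: row $1$ forces $v_1=u_m v_0$; each of the bottom $N$ rows then reduces to $\alpha_i(v_1-u_m v_0)=0$ and is automatically satisfied; row $2$ contributes the single relation $(gh-\Sigma)v_0+\sum_i \frac{2\alpha_i}{2i+1}v_{i+1}=0$. These two linear constraints are independent (the first has a nonzero $v_1$-coefficient, the second does not), so the kernel has dimension $N$, matching the algebraic multiplicity. Hence $A_N$ is diagonalizable over $\mathbb{R}$ and the system is hyperbolic.

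I do not anticipate a serious obstacle: the block reduction makes the determinant routine, and the only genuine care points are restricting the Schur complement to $\lambda\neq u_m$ (cured by polynomial continuity) and the semisimplicity verification, which is precisely what upgrades a real spectrum to hyperbolicity. Note that strict hyperbolicity fails, since $u_m$ is a root of multiplicity $N\ge1$; only diagonalizability-based (non-strict) hyperbolicity is available, and that is what the theorem asserts.
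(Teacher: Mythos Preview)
Your proof is correct and takes a genuinely different route from the paper. The paper shifts by $\widetilde\lambda=\lambda-u_m$ and then computes the determinant by cofactor expansion along the first row, obtaining two $(N+1)\times(N+1)$ minors $C_{N+1}$, $D_{N+1}$ that are in turn evaluated by a short recursion (expansion along the last row). Your Schur-complement reduction exploits the same structural feature---the bottom-right $N\times N$ block is $u_mI_N$---but packages it into a single $2\times2$ determinant instead of a recursion; this is cleaner and avoids the bookkeeping of the minors $B_N$, $C_N$, $D_N$. Your handling of the continuity extension across $\lambda=u_m$ is a necessary care point that the block approach requires and the paper's cofactor expansion does not.

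One genuine addition in your argument is the explicit semisimplicity check of the repeated eigenvalue $u_m$. The paper's proof closes with ``the propagation speeds prove that the system is hyperbolic for positive water height,'' which strictly speaking is insufficient: real spectrum alone does not give hyperbolicity, and only after the proof does the paper write down an explicit eigenvector basis. Your direct computation of $\dim\ker(A_N-u_mI)=N$ (two independent constraints on $N+2$ unknowns, independence guaranteed by $gh>0$) supplies exactly this missing step within the proof itself. So your version is both shorter in the determinant computation and more self-contained on the hyperbolicity claim.
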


\begin{proof}
    The proof closely follows the proof of the characteristic polynomial of the HSWME system matrix in \cite{Koellermeier2020}. However, we can compute the characteristic polynomial and all eigenvalues explicitly here.
    
    We write $\widetilde{\lambda}=\lambda - u_m$, so that we can compute the characteristic polynomial using
    \begin{eqnarray*}
        \chi_{A_N} (\lambda)  &=& \det \left(A_N - \lambda I \right) \\
        &=& \det \left( A_N - \left(\widetilde{\lambda}+u_m\right) I \right).
    \end{eqnarray*}

    When writing $A_N$, the following notation is used for conciseness:
    \begin{equation*}
    \begin{array}{c}
        d_0 = gh + \sum_{i=1}^N \frac{\alpha_i^2}{2i+1}, \\
        \quad d_i = -2u \alpha_i, \textrm{ for } i=1,\ldots,N\\
        \quad c_i = 2 \alpha_i, \textrm{ for } i=1,\ldots,N\\
        \quad b_i = \frac{2\alpha_i}{2i+1}, \textrm{ for } i=1,\ldots,N.
        \end{array}
    \end{equation*}
    
    Computing the determinant $\left| A_N - \left(\widetilde{\lambda}+u_m \right) I \right|$ by developing with respect to the first row yields
    \begin{eqnarray*}
    && \left| A_N - \left(\widetilde{\lambda}+u_m \right) I \right| = \left|
        \begin{array}{ccccc}
        -\widetilde{\lambda} - u_m  & 1 &  &  &  \\
        d_0 & u_m-\widetilde{\lambda} & b_1 & \dots & b_N\\
         d_1 & c_1 & -\widetilde{\lambda} &   &  \\
         \vdots & \vdots &  & \ddots &    \\
        d_N & c_N &  &  & -\widetilde{\lambda}\\
        \end{array}
        \right| \\
    &=& \left(-\widetilde{\lambda} - u_m\right) \cdot \underbrace{\left| \begin{array}{cccc}
         u_m-\widetilde{\lambda} & b_1 & \dots & b_N\\
          c_1 & -\widetilde{\lambda} &   &  \\
          \vdots &  & \ddots &    \\
         c_N &  &  & -\widetilde{\lambda}\\
        \end{array}
        \right|}_{= C_{N+1} \in \mathbb{R}^{(N+1)\times(N+1)}} - 1 \cdot \underbrace{\left| \begin{array}{cccc}
         d_0 & b_1 & \dots & b_N\\
          d_1 & -\widetilde{\lambda} &   &  \\
          \vdots &  & \ddots &    \\
         d_N &  &  & -\widetilde{\lambda}\\
        \end{array}
        \right|}_{= D_{N+1} \in \mathbb{R}^{(N+1)\times(N+1)}}\\
    \end{eqnarray*}
    
    The determinants of $C_{N+1},D_{N+1}$ are computed by developing with respect to the last row as
    \begin{equation*} \left| C_{N+1} \right|=
        \left| \begin{array}{cccc}
         u_m-\widetilde{\lambda} & b_1 & \dots & b_N\\
          c_1 & -\widetilde{\lambda} &   &  \\
          \vdots &  & \ddots &    \\
         c_N &  &  & -\widetilde{\lambda}\\
        \end{array}
        \right| = (-1)^{N+2} c_N \underbrace{\left| \begin{array}{cccc}
         b_1 & \dots & b_{N-1} & b_N\\
          -\widetilde{\lambda} &   &  &\\
           & \ddots & &   \\
          &  & -\widetilde{\lambda} &\\
        \end{array}
        \right|}_{= B_{N} \in \mathbb{R}^{N\times N}} + (-1)^{2N+2} \left(-\widetilde{\lambda}\right) \left| C_{N} \right|
    \end{equation*}
    and
    \begin{equation*} \left| D_{N+1} \right|=
        \left| \begin{array}{cccc}
         d_0 & b_1 & \dots & b_N\\
          d_1 & -\widetilde{\lambda} &   &  \\
          \vdots &  & \ddots &    \\
         d_N &  &  & -\widetilde{\lambda}\\
        \end{array}
        \right| = (-1)^{N+2} d_N \underbrace{\left| \begin{array}{cccc}
         b_1 & \dots & b_{N-1} & b_N\\
          -\widetilde{\lambda} &   &  &\\
           & \ddots & &   \\
          &  & -\widetilde{\lambda} &\\
        \end{array}
        \right|}_{= B_{N} \in \mathbb{R}^{N\times N}} + (-1)^{2N+2} \left(-\widetilde{\lambda}\right) \left| D_{N} \right|.
    \end{equation*}
    
    The determinant of $B_{N}$ is easily computed as 
    \begin{equation*}
        \left| B_{N} \right|= (-1)^{N+1} b_N \left(-\widetilde{\lambda}\right)^{N-1}.
    \end{equation*}
    With the help of this, we get
    \begin{equation*}
        \left| C_{N+1} \right|= -c_N b_N  \left(-\widetilde{\lambda}\right)^{N-1} + \left(-\widetilde{\lambda}\right) \left| C_{N} \right|= \ldots = \left(-\widetilde{\lambda}\right)^{N-1} \left( - \sum_{i=1}^N c_i b_i\right) + \left(-\widetilde{\lambda}\right)^{N} \underbrace{\left( u_m -\widetilde{\lambda}\right)}_{= \left| C_{1} \right|}
    \end{equation*}
    and analogously 
    \begin{equation*}
        \left| D_{N+1} \right|= -d_N b_N  \left(-\widetilde{\lambda}\right)^{N-1} + \left(-\widetilde{\lambda}\right) \left| D_{N} \right|= \ldots = \left(-\widetilde{\lambda}\right)^{N-1} \left( - \sum_{i=1}^N d_i b_i\right) + \left(-\widetilde{\lambda}\right)^{N} \underbrace{d_0}_{= \left| D_{1} \right|}.
    \end{equation*}
    Note that $- \sum_{i=1}^N c_i b_i = - \sum_{i=1}^N \frac{4 \alpha_i^2}{2i+1}$, $- \sum_{i=1}^N d_i b_i = \sum_{i=1}^N \frac{4 \alpha_i^2}{2i+1} u_m$, $d_0 = gh + \sum_{i=1}^N \frac{\alpha_i^2}{2i+1}$.
    
    Next, insertion of these terms into the characteristic polynomial of the system matrix $A_{N}$ yields
    \begin{eqnarray*}
        \left| A_N - \left(\widetilde{\lambda}+u_m \right) I \right| &=& \left(-\widetilde{\lambda}-u_m\right)\cdot\left| C_{N+1}\right| - 1\cdot \left| D_{N+1}\right|\\
        &=& \left(-\widetilde{\lambda}-u_m\right)\cdot\left[ \left(-\widetilde{\lambda}\right)^{N-1} \left( - \sum_{i=1}^N c_i b_i\right) + \left(-\widetilde{\lambda}\right)^{N} \left( u_m -\widetilde{\lambda}\right) \right] \\
        && - 1\cdot \left[ \left(-\widetilde{\lambda}\right)^{N-1} \left( - \sum_{i=1}^N d_i b_i\right) + \left(-\widetilde{\lambda}\right)^{N} d_0\right] \\
        &=& \left(-\widetilde{\lambda}\right)^{N} \left[ \widetilde{\lambda}^2 - gh - \sum_{i=1}^N \frac{3 \alpha_i^2}{2i+1} \right]\\
        &=& \left(u_m-\lambda\right)^{N} \left[ (\lambda-u_m)^2 - gh - \sum_{i=1}^N \frac{3 \alpha_i^2}{2i+1} \right],
    \end{eqnarray*}
    which proves the first part of the theorem.
    
    Setting the characteristic polynomial to zero results in the following propagation speeds of the system:
    \begin{equation*}
        \lambda_{1,2} = u_m \pm \sqrt{gh + \sum_{i=1}^N \frac{3 \alpha_i^2}{2i+1}}, \quad \textrm{ and } \quad \lambda_{i+2} = u_m, ~\textrm{ for }~ i=1,\ldots,N.
    \end{equation*}
    
    The propagation speeds prove that the system is hyperbolic for positive water height.
\end{proof}

From the form of the eigenvalues, the new model for $N\geq2$ can be seen as a consistent extension of the hyperbolic $N=1$ model from Section \ref{sec:SWME}, compare also the eigenvalues in equation \eqref{e:EV_SWMEN1}.

%The eigenvectors of the new hyperbolic SWME system can be computed and the wave structure can be analyzed. It can be shown that the two waves corresponding to the $\lambda_{1,2}$ are genuinely non-linear and the remaining waves with wave speeds $\lambda_{3,\hdots,N+2}=u_m$ are linearly degenerate.
We remark that such an analysis is not possible for the original SWME model for arbitrary $N$ as the eigenvalues have a very complicated structure and cannot be given in analytical form. For the new hyperbolic model, the eigenvalues $\lambda_{1,2}$ still depend on all flow variables. However, the analysis can be carried out analytically. For the hyperbolic HSWME and $\beta$HSWME models in \cite{Koellermeier2020}, the eigenvalues depend solely on $\alpha_1$, which is a drastic simplification. For those models, the eigenvector analysis is still very involved and theoretical results are only possible for small values of $\left(M\alpha\right)_1 \ll 1$. In this case, the model has the same wave properties as the SWLME system. 
From a straightforward computation, the eigenvectors $v_i$ for $i=1, \ldots, N+2$ of the SWLME system can be derived as
\begin{equation}
    v_{1,2} = \begin{pmatrix}
    \frac{1}{2 \alpha_n} \\
    \displaystyle\frac{1}{2 \alpha_n} \left( u_m + \sqrt{gh \pm \sum_{i=1}^N \frac{3 \alpha_i^2}{2i+1}}\right) \\
    \frac{\alpha_1}{\alpha_N} \\
    \vdots \\
    \frac{\alpha_N}{\alpha_N}
    \end{pmatrix}
\end{equation}
\begin{equation}
    v_{i+2} = \begin{pmatrix}
    \displaystyle{\frac{6\alpha_{n+1-1}}{(2(n+1-i)+1) -3gh + \sum_{i=1}^N \frac{3 \alpha_i^2}{2i+1}}} \\
    \displaystyle\frac{6\alpha_{n+1-1}u}{(2(n+1-i)+1) -3gh + \sum_{i=1}^N \frac{3 \alpha_i^2}{2i+1}} \\
    \delta_{n+3-i,3} \\
    \vdots \\
    \delta_{n+3-i,N} 
    \end{pmatrix}, \text{ for } i=1,\ldots, N,
\end{equation}
for Kronecker delta $\delta_{i,j}$.

It can be checked that the first two eigenvalues $\lambda_{1,2}$ are genuinely non-linear, while all other eigenvalues $\lambda_{i+2}$ for $i=1,\ldots, N$ are linearly degenerate. Note that the analysis of eigenvalues and eigenvectors is not possible for the SWME system, due to the lack of hyperbolicity. The linearization within the $N$ higher moment equations during the derivation procedure consistently leads to the resulting $N$ linearly degenerate eigenvalues. However, the first two eigenvalues, corresponding to the unchanged conservation of mass and momentum, remain genuinely non-linear. The full characterization of the eigenstructure of the SWLME allows for the use of efficient numerical methods, for example using the relation between Riemann solvers and PVM methods \cite{castro2012class}.
 
Rankine-Hugoniot conditions can be derived analogously to the SWME case with $N=1$ as follows.
For flat bottom $\partial_x b=0$ and zero friction, the steady state fulfills
\begin{eqnarray}
    \partial_x \left(h u_m\right) &=& 0\\
    \partial_x \left( h u_m^2+\frac{1}{2}g h^2 + \frac{1}{3}h\alpha_1^2 + \ldots + \frac{1}{2N+1}h\alpha_N^2 \right) &=& 0\\
    \partial_x \left( 2h u_m\alpha_1 \right) &=& u_m \partial_x \left( h\alpha_1 \right)\\
     &\vdots& \\
    \partial_x \left( 2h u_m\alpha_N \right) &=& u_m \partial_x \left( h\alpha_N \right)
\end{eqnarray}
First looking at all equations except the second, we obtain after some modification
\begin{eqnarray}
    h u_m &=& const,\\
    u_m=0 \textrm{ or } \frac{\alpha_i}{h} &=& const, \textrm{ for } i=1,\ldots,N.
\end{eqnarray}
Using those relations in the remaining second equation, we can derive the Rankine-Hugoniot conditions from a given state $\left( h_0, h_0 u_{m,0}, h_0\alpha_{1,0}, \ldots, h_0 \alpha_{N,0} \right)$ to a state $\left( h, h u_m, h\alpha_{1}, \ldots, h \alpha_{N} \right)$ and obtain (after some modifications)
\begin{equation}
    (h-h_0)\left[ -\frac{u_{m,0}^2}{g h_0} + \frac{1}{2}\left(\left(\frac{h}{h_0}\right)^2+ \left(\frac{h}{h_0}\right)\right) + \sum_{i=1}^N \frac{1}{2i+1}\frac{\alpha_{i,0}^2}{g h_0} \left( \left(\frac{h}{h_0}\right)^3 + \left(\frac{h}{h_0}\right)^2 + \left(\frac{h}{h_0}\right) \right) \right] = 0.
\end{equation}
We extend the previous dimensionless flow numbers by using one number for each variable:
\begin{eqnarray}
    Fr &=& \frac{u_{m,0}}{\sqrt{gh_0}},\\
    \left(M\alpha\right)_i &=& \frac{\alpha_{i,0}}{u_{m,0}}, \quad \textrm{ for } i=1,\ldots,N,
\end{eqnarray}
writing $y=\frac{h}{h_0}$, we arrive at the two solutions
\begin{equation}
    h=h_0 ~\vee~ -Fr^2 + \frac{1}{2}\left(y^2+ y\right) + \sum_{i=1}^N \frac{1}{2i+1} \left(M\alpha\right)_i^2 Fr^2 \left( y^3+ y^2 + y \right) = 0.
\end{equation}
From the previous equation, we see a new dimensionless number ${M\alpha}^2 := \sum_{i=1}^N \frac{1}{2i+1} \left(M\alpha\right)_i^2$ appearing. The new number ${M\alpha}$ measures the total deviation from equilibrium. This leads to a consistent extension of the SWME $N=1$ test case above. We see that the Rankine-Hugoniot conditions allow for similar solutions as before, this time with $Fr$ and $M\alpha$ as dimensionless flow numbers. We note that the equations always have at least one solution for non-zero $Fr$ and $M\alpha$.

Analogously, we extend the conditions for smooth and frictionless steady states including a bottom topography. We will later use this to derive a well-balancing scheme. We can derive 
\begin{equation}
    \partial_x \left( \frac{1}{2}u_m^2 + g(h+b) + \frac{3}{2}\sum_{i=1}^N \frac{1}{2i+1}\alpha_i^2 \right) = 0.
\end{equation}
The non-trivial steady state solution can thus be found using
\begin{eqnarray}\label{eq:stationary_solutions_1}
    h u_m &=& const,\\
    \label{eq:stationary_solutions_2}
    \frac{1}{2}u_m^2 + g(h+b)  + \frac{3}{2}\sum_{i=1}^N \frac{1}{2i+1}\alpha_i^2 &=& const,\\
    \label{eq:stationary_solutions_3}
    \frac{\alpha_i}{h} &=& const, \textrm{ for } i=1,\ldots,N.
\end{eqnarray}
This expression can be used in the following numerical methods section to obtain a proper well-balancing scheme for the new model. First, we will rewrite the model in the proper form with a conservative and non-conservative part to use it in the numerical schemes thereafter.

The system \eqref{hyperbolic_system_N} with topography but without friction terms is therefore written in the form 
\begin{equation}
    U_{t} + \partial_x F(U) + B(U) \partial_x U  = S(U) \partial_x b .
\end{equation}

By straightforward calculation, we obtain
\begin{equation}
    U = \left(\begin{array}{c}
     h  \\
     hu_m \\
     h\alpha_1 \\
     \vdots \\
     h\alpha_N
\end{array}\right), \ \ F(U) = \begin{pmatrix}
    h u_m \\
    h u_m^2+g\frac{h^2}{2} +\frac{1}{3}h\alpha_1^2+\ldots+\frac{1}{2N+1}h\alpha_N^2\\
    2h u_m\alpha_1 \\
    \vdots \\
    2h u_m\alpha_N
    \end{pmatrix},
\end{equation}
\begin{equation}
    B(U) = diag(0,0,-u_m,...,-u_m).
\end{equation}

We can also write this system in the form
\begin{equation}\label{SWMEN1nonconservativeform}
    \partial_t W + \mathcal{A}(W) \partial_x W = 0,
\end{equation}
with
$$
W=\left(\begin{array}{c}
     h  \\
     h u_m \\
     h\alpha_{1} \\
     \vdots \\
     h\alpha_{N}\\
     b
\end{array}\right), \ \ \mathcal{A}(W) = \left(\begin{array}{cc}
     A(W) & -S(W) \\
     0 & 0
\end{array}\right),
$$
where $A(W)$ has the form \eqref{e:A_nHWSME} and 
$S(W) = \left(\begin{array}{c}
     0  \\
     -gh \\
     0 \\
     \vdots \\
     0
\end{array}\right)$.

For comparison we note that also the existing HSWME and $\beta$-HSWME models from \cite{Koellermeier2020} can be written in the same form, see the appendix \ref{app}.

\section{Numerical methods}
\label{sec:numerics}
In this section, we recall the general high-order well-balanced method from \cite{castro2020well} and construct the first order as well as the second order scheme for applications of the SWLME derived in the previous section. At the end of the section we will outline the specific spatial discretization scheme used for the numerical tests in the next section.

\subsection{A general high-order well-balanced procedure}
The previously derived shallow water models can all be written as non-conservative systems of the form
\begin{equation}\label{FBSsystem}
    \partial_t U + \partial_x F(U) + B(U) \partial_x U = S(U) \partial_x b.
\end{equation}
It is well known that these systems are equivalent to
\begin{equation}\label{nonconservativesystem}
    \partial_t W + \mathcal{A}(W) \partial_x W = 0,
\end{equation}
where 
$$W=\left(\begin{array}{c}
     U  \\
     b 
\end{array}\right), \ \ \mathcal{A}(W)=\left(\begin{array}{cc}
     \frac{\partial F}{\partial U}(U) + B(U) & -S(U)  \\
     0 & 0 
\end{array}\right).$$
The goal of this section is to develop a family of numerical methods that are well-balanced for the frictionless SWLME introduced before, i.e., that preserve the stationary solutions verifying (\ref{eq:stationary_solutions_1}), (\ref{eq:stationary_solutions_2}) and (\ref{eq:stationary_solutions_3}). In this section we will follow \cite{castro2020well} adding the non-conservative products. The interested reader is referred to this reference for details and proofs.\\
We consider semi-discrete finite-volume methods of the form
\be\label{eq:semi_implicit}
\frac{dW_{i}}{dt}= -\frac{1}{\Delta x}
\Big(
D^{-}_{i+\frac{1}{2}}+D^{+}_{i-\frac{1}{2}} + \int_{x_{i-\frac{1}{2}}}^{x_{i+\frac{1}{2}}} \mathcal{A}(\mathbb{P}_{i}(x))\frac{\partial}{\partial x}\mathbb{P}_{i}(x)dx
\Big),
\ee
where
\begin{itemize}
	\item $W_{i}(t) \cong \displaystyle \int_{x_{i+\frac{1}{2}}}^{x_{i-\frac{1}{2}}} W(t,x) \,dx$ is the respective cell average value, 
	\item $\mathbb{P}_{i}(x)$ is a high-order well-balanced operator in the sense defined in \cite{castro2020well}.
	\item $D_{i+\frac{1}{2}}^{\pm} = \mathbb{D}^{\pm}\left(W_{i+\frac{1}{2}}^{-}, W_{i+\frac{1}{2}}^{+}\right)$, is the respective fluctuation with reconstructed states
	$$
	W_{i+\frac{1}{2}}^{-} = \mathbb{P}_{i}(x_{i+\frac{1}{2}}),\ \ W_{i+\frac{1}{2}}^{+} = \mathbb{P}_{i+1}(x_{i+\frac{1}{2}}),
	$$
	and $\mathbb{D}(W_{l}, W_{r})$ verifies:
	\begin{equation}\label{eq:D}
	    \mathbb{D}^{-}(W_{l}, W_{r}) + \mathbb{D}^{+}(W_{l}, W_{r}) = \int_{0}^{1} \mathcal{A}(\Psi)\frac{\partial \Psi}{\partial s} \,ds,
	\end{equation}
	where $\Psi$ is a family of paths joining $W_{l}$ with $W_{r}$.
\end{itemize}

In order to design the high-order well-balanced operator we follow the strategy introduced in \cite{castro2008well}. The following steps need to be performed in order to compute $\mathbb{P}_{i}$ at the cell $[x_{i-\frac{1}{2}}, x_{i+\frac{1}{2}}]$ for a given family of cell values $\{W_{i}\}$:
\begin{enumerate}
	\item Obtaining the steady solution $W_{i}^{*}(x)$ such that:
	\be\label{eq:step1}
	\frac{1}{\Delta x}\int_{x_{i-\frac{1}{2}}}^{x_{i-\frac{1}{2}}} W_{i}^{*}(x)dx = W_{i},
	\ee
	if possible. In other cases consider $W_{i}^{*}\equiv W_{i}^{n}$.
	\item Computing the fluctuations $\{V_{j}\}_{j\in S_{i}}$ within the stencil $S_{i}$:
	\be\label{eq:step2}
	V_{j} = W_{j} - \frac{1}{\Delta r} \int_{x_{j-\frac{1}{2}}}^{x_{j+\frac{1}{2}}}W_{i}^{*}(x)dx, \ \ j\in S_{i}.
	\ee
	\item Applying the reconstruction operator with the necessary order to the fluctuations $\{V_{j}\}_{j\in S_{i}}$:
	$$
	Q_{i}(x) = Q_{i}(x; \{V_{j}\}_{j\in S_{i}}).
	$$
	\item Defining the well-balanced operator:
	$$
	\mathbb{P}_{i}(x) = W_{i}^{*}(x) + Q_{i}(x).
	$$
\end{enumerate}

$\mathbb{P}_{i}$ is well-balanced for every steady solution provided that the reconstruction operator $Q_{i}$ is exact for the null function. Moreover, it is conservative, i.e., 
$$
\frac{1}{\Delta x} \int_{x_{i-\frac{1}{2}}}^{x_{i+\frac{1}{2}}} \mathbb{P}_{i}(x) dr = W_{i}, \ \ \text{ for all }  i,
$$
provided that $Q_{i}$ is conservative, and it is high-order accurate provided that the steady solutions are smooth (see \cite{castro2020well} for details).

%\subsection{SWLME \texorpdfstring{$N\geq 1$}{}}
\subsection{First order well-balanced scheme}
We apply the steps of the previous subsection to the system (\ref{SWMEN1nonconservativeform}) in a first order setup before considering the second order scheme in the next section. As the bottom topography $b$ is known, we will focus on the other variables of the system.

The cell averages of the initial condition will be computed using the mid-point rule, that is
$$
W_{i}^{0} =  W_{0}(x_{i}), \ \ \text{ for all }  i,
$$
where $W_{0}(x)$ is the initial condition. \\
In the case of the SWLME system, the steady state solutions verify:
	\begin{eqnarray*}
	h u_m &=& C_{1} \equiv const,\\
    \frac{1}{2}u_m^2 + g(h+b)  + \frac{3}{2}\sum_{i=1}^N \frac{1}{2i+1}\alpha_i^2 &=& C_{2} \equiv const,\\
    \frac{\alpha_{1}}{h} &=& C_{3} \equiv const,\\
    \frac{\alpha_{2}}{h} &=& C_{4} \equiv const, \\
    &\vdots& \\
    \frac{\alpha_{N}}{h} &=& C_{N+2} \equiv const.
	\end{eqnarray*}
	Using the mid-point rule in (\ref{eq:step1}) the first step is to obtain, if possible, the stationary solution $W_{i}^{*}$ such that:
	\be\label{eq:step1SWMEN1}
	W_{i}^{*}(x_{i}) = W_{i}.
	\ee
	With this information the constants $C_{1}, C_{2}$, $ C_{3}$,...,$C_{N+2}$ can be computed as
	\begin{equation}\label{eq:constantN}
	\left\{\begin{array}{l}
	     C_1 = h_{i}u_{m,i}, \\
	     C_2 = \frac{1}{2}u_{m,i}^2 + g(h_{i}+b(x_{i}))  + \frac{3}{2}\sum_{j=1}^N \frac{1}{2j+1}\alpha_{j,i}^2, \\
	     C_{3} = \frac{\alpha_{1,i}}{h_{i}},\\
	     C_{4} = \frac{\alpha_{2,i}}{h_{i}},\\
	     \ \ \ \vdots \\
	     C_{N+2} = \frac{\alpha_{N,i}}{h_{i}}.
	\end{array}\right.
	\end{equation}
	Using the relations \eqref{eq:constantN}, the stationary solution can be evaluated in a point $x=a$. The evaluation of the steady state solution requires finding roots of the function
	\begin{equation}\label{eq:functionN}
	    f(h) = Dh^{4} + 2h^{3}g+2h^{2}(gb(a)-C_2)+C_{1}^{2},
	\end{equation}
	where the parameter $D$ is given by
	$$D=C_{3}^{2}+\frac{3}{5}C_{4}^{2} + \dots + \frac{3}{2N+1}C_{N+2}^{2}.$$
	The derivative of the function $f$ is given by
	$$f'(h) = 4Dh^{3} + 6h^{2}g+4h(gb(a)-C_2).$$
	The positive root $h_c$ of $f'(h)$ is
	\begin{equation}\label{eq:criticalpointN}
	    h_{c} = \frac{-3g + \sqrt{9g^{2}-16D(b(a)g-C_{2})}}{4D},
	\end{equation}
	and we can see that it is a minimum of the function $f$. An example of a function $f$ is plotted in Figure \ref{fig:Function_f_example}.
	\begin{figure}[htbp]
    \centering
        \begin{overpic}[width=0.6\textwidth]{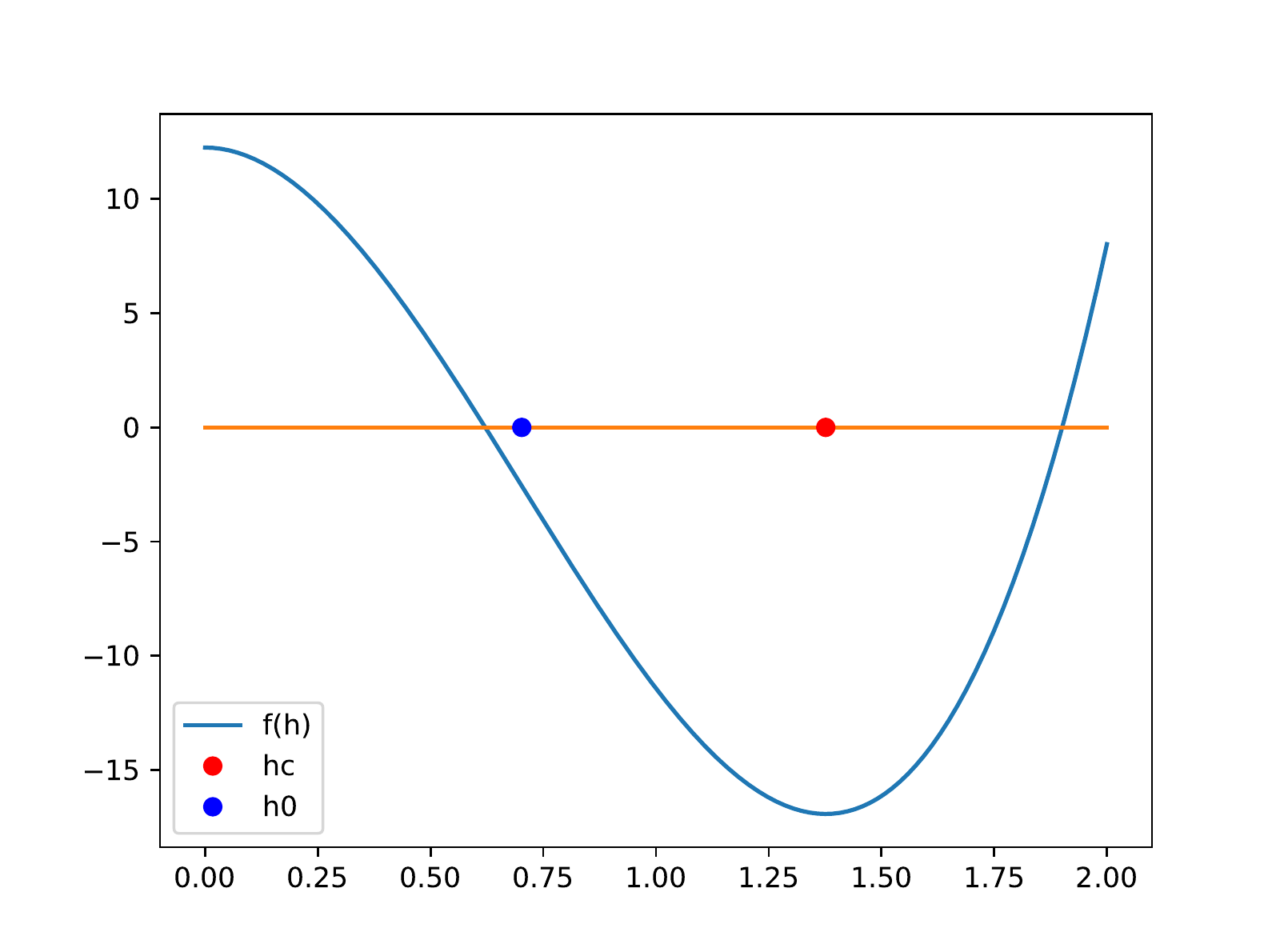}
            \put(91,4){$h$}
            \put(3, 62){$f(h)$}
        \end{overpic}
        \caption{An example of the root finding function $f(h)$ \eqref{eq:functionN} with some constants $C_{i}$. The minimum $h_c$ and the initial value of the Newton algorithm $h_0$ are shown.}
        \label{fig:Function_f_example}
    \end{figure}
	
	\begin{observacion}
	In case of $D=0$, the minimum simplifies to
	$$h_c = -\frac{2(b(a)g-C_2)}{3g}.$$
	\end{observacion}
	\begin{observacion}
	In order to find the roots of $f(h)$ the \textit{Newton-Raphson} method is employed with initial value $h_0$ that is the positive root of 
	$$f''(h) = 12Dh^{2} + 12hg+4(gb(a)-C_2),$$
	given by
	$$h_0 = \frac{-3g + \sqrt{9g^{2}-12D(b(a)g-C_{2})}}{6D}.$$
	It is easy to see that $0\leq h_0 \leq h_c$, compare also Figure \ref{fig:Function_f_example}.
	\end{observacion}
	We can conclude then the following: If $f(h_{c})<0$ there exist two possible states for $W_{i}^{*}(x_{i\pm\frac{1}{2}})$, one subcritical and one supercritical. The following criterion will be used to choose one state:
	\begin{enumerate}
	    \item If $W_i$ is subcritical or supercritical, then we will choose the solution in the same regime (subcritical or supercritical) as $W_i$ for $W_{i}^{*}(x_{i\pm\frac{1}{2}})$.
	    \item If $W_i$ is transcritical, then the solution that has the same behaviour (subcritical or supercritical) as $W_{i-1}$ will be selected for $W_{i}^{*}(x_{i-\frac{1}{2}})$ and the solution whose behaviour is the same as $W_{i+1}$ will be selected for $W_{i}^{*}(x_{i+\frac{1}{2}})$.
	\end{enumerate}
	Following the procedure described in \cite{castro2020well}, the reconstruction operator reduces to $\mathbb{P}_i(x)=W_{i}^*(x)$ and the first order numerical scheme reduces to:
\be
W_{i}^{n+1}=W_{i}^{n} -\frac{\Delta t}{\Delta r}(
D^{-}_{i+\frac{1}{2}}+D^{+}_{i-\frac{1}{2}}),
\ee
for $W_{i-\frac{1}{2}}^{+} = \mathbb{P}_{i}(x_{i-\frac{1}{2}})$ and $W_{i+\frac{1}{2}}^{-} = \mathbb{P}_{i}(x_{i+\frac{1}{2}})$,
where we have used that $\mathbb{P}(x) = W_{i}^{*}(x)$ is a steady solution.\\
In the case we could not find such a stationary solution verifying (\ref{eq:step1SWMEN1}) the standard trivial reconstruction is considered.

\subsection{Second order well-balanced scheme}
Now we consider the second order scheme for which a second order spatial reconstruction using the minmod limiter will be employed, see \cite{castro2020well}.

The cell averages of the initial condition are again computed using the mid-point rule:
$$
W_{i}^{0} =  W_{0}(x_{i}), \ \ \text{ for all }  i,
$$
where $W_{0}(x)$ is the initial condition.
\begin{enumerate}
	\item Obtaining the steady solution: 
    In the same fashion as for the first order scheme, if possible, the steady state $W_{i}^{*}$ needs to be found such that
	\be
	W_{i}^{*}(x_{i}) = W_{i}.
	\ee
	After computing the constants $C_{1}, C_{2}$, $ C_{3}$,...,$C_{N+2}$ as in (\ref{eq:constantN}),
	the stationary solution can be evaluated in a point $x=a$. In order to do this, the roots of the function $f$ in (\ref{eq:functionN}) needs to be computed. As defined in (\ref{eq:criticalpointN}), $f$ has a minimum in $h_{c}$. Again if $f(h_c)<0$ there exist two possible values for $W_{i}^{*}(i\pm 1, i\pm \frac{1}{2})$ and we use the same criterion as for the first order scheme in order to choose one.
    \item Computing the fluctuations: After the evaluation of the stationary solution in a point $r=a$ the fluctuations $\{V_{i-1},V_{i},V_{i+1}\}$ in (\ref{eq:step2}) are computed using the mid-point rule
    $$
    \begin{array}{ll}
        V_{i-1} &= W_{i-1} - W_{i}^{*}(x_{i-1}),\\
        V_{i} &= W_{i} - W_{i}^{*}(x_i) = 0,\\
        V_{i+1} &= W_{i+1} - W_{i}^{*}(x_{i+1}).
    \end{array}
    $$
    \item Applying the reconstruction operator:
    After the fluctuations are computed the $minmod$ reconstruction is used to obtain the reconstruction operator (see \cite{van1973towards})
    $$Q_{i}(x) = V_{i}+minmod\left(\displaystyle \frac{V_i-V_{i-1}}{\Delta x}, \frac{V_{i+1}-V_{i-1}}{2\Delta x}, \frac{V_{i+1}-V_{i}}{\Delta x}\right)(x-x_{i}),$$
    where
    $$
    minmod(a,b,c) = \begin{cases}
    min\{a,b,c\} & \text{if} \ \ a,b,c>0, \\
	max\{a,b,c\} & \text{if} \ \ a,b,c <0, \\
    0 & \text{otherwise.}
    \end{cases}
    $$
    \item Defining the well-balanced operator: The well-balanced reconstruction operator is given by
    $$
    \mathbb{P}_{i}(x) = W_{i}^{*}(x) + Q_{i}(x).
    $$
    \end{enumerate}

The well-balanced property can be lost if a quadrature formula is used directly in the right part of (\ref{eq:semi_implicit}), as the quadrature formula is in general not exact. Therefore, the semi-discrete scheme is first rewritten as proposed in \cite{castro2020well} taking into account the non-conservative part
	\begin{eqnarray*}
	    \frac{dW_{i}}{dt}= -\frac{1}{\Delta x}
\Big(
D^{-}_{i+\frac{1}{2}}+D^{+}_{i-\frac{1}{2}} +
\int_{x_{i-\frac{1}{2}}}^{x_{i+\frac{1}{2}}} \left(\mathcal{A}(\mathbb{P}_{i}(x))\frac{\partial}{\partial x}\mathbb{P}_{i}(x) - \mathcal{A}(W_{i}^{*}(x))\frac{\partial}{\partial x}W_{i}^{*}(x)\right)dx\\  + \int_{x_{i-\frac{1}{2}}}^{x_{i+\frac{1}{2}}} \mathcal{A}(W_{i}^{*}(x))\frac{\partial}{\partial x}W_{i}^{*}(x)dx\Big).
	\end{eqnarray*}
Once this equivalent form is obtained, we use that $W_{i}^{*}$ is a stationary solution in the second integral and then employ the mid-point rule for the first integral without losing the well-balanced property what leads to
\begin{equation}
    \frac{dW_{i}}{dt}= -\frac{1}{\Delta x}
    \Big(D^{-}_{i+\frac{1}{2}}+D^{+}_{i-\frac{1}{2}} + 	\mathcal{A}(\mathbb{P}_{i}(x_{i}))minmod\left(\displaystyle \frac{V_i-V_{i-1}}{\Delta x}, \frac{V_{i+1}-V_{i-1}}{2\Delta x}, \frac{V_{i+1}-V_{i}}{\Delta x}\right)\Big),
\end{equation}
for $W_{i-\frac{1}{2}}^{+} = \mathbb{P}_{i}(x_{i-\frac{1}{2}})$ and $W_{i+\frac{1}{2}}^{-} = \mathbb{P}_{i}(x_{i+\frac{1}{2}})$. The discretization in time is performed with a Runge-Kutta TVD method of order 2, see \cite{gottlieb1998total}.

\begin{observacion}
     The extension to higher-order is straightforward: Although not implemented in the present paper, a third order well-balanced scheme will be based on the two point Gaussian quadrature formula for computing the averages. In the first step, we need to find the constants $C_{j
    }$, $j=1,...,N+2$ such that
    $$\frac{1}{2}W_{i}^*(x_a,C_{1},...,C_{N+2}
    ) + \frac{1}{2}W_{i}^*(x_b,C_{1},...,C_{N+2}
    ) = W_i,$$
    where $x_a$ and $x_b$ are the two quadrature points and $W_{i}^*(x,C_{1},...,C_{N+2}
    )$ represents the stationary solution given by the constants $C_j$ evaluated in $x$. Then we follow the steps considering a third order reconstruction operator (e.g. CWENO reconstruction) and using again the two point Gaussian quadrature.

\end{observacion}

\subsection{Spatial discretization}
In order to completely define the scheme, what remains is to define the form of the fluctuations $D_{i+\frac{1}{2}}^{\pm}$ and the non-conservative terms in \eqref{eq:semi_implicit}, for which we use a path-consistent scheme based on segments in the conservative variables as family of paths joining two states:
$$\Psi(s;W_l,W_r) = \left(\begin{array}{c}
     \Psi_U(s;W_l,W_r)  \\
     \Psi_b(s;W_l,W_r)
\end{array}\right) = \left(\begin{array}{c}
     U_l + s(U_r-U_l) \\
     b_l + s(b_r-b_l)
\end{array}\right), \ \ s\in[0,1],$$
and a PVM-like method \cite{castro2012class} corresponding to a choice in (\ref{eq:D}) of 
%\begin{equation}\label{eq:PVM_method}
%    D_{i+\frac{1}{2}}^{\pm} = \frac{1}{2}\left(F(U_{i+1})-F(U_i) + B_{i+\frac{1}{2}}(U_{i+1}-U_{i}) - S_{i+\frac{1}{2}}(H_{i+1}-H_{i}) \pm Q_{i+\frac{1}{2}}(U_{i+1}-U_{i}-A_{i+\frac{1}{2}}^{-1}S_{i+\frac{1}{2}}(H_{i+1}-H_{i}))\right),
%\end{equation}

%{\color{red} Hi again, please check the notation. I think it is better here to use $W_l$ or $U_l$ or $W_{i+1/2}^\pm$ or $U_{i+1/2}^\pm$ than $W_i$ or $U_i$. Note that those operators are going to be evaluated at reconstructed states!}
\begin{eqnarray}\label{eq:PVM_method}
    D_{i+\frac{1}{2}}^{\pm} = \frac{1}{2}\left(F(U_{r})-F(U_l) + B_{i+\frac{1}{2}}(U_r-U_l) - S_{i+\frac{1}{2}}(b_r-b_l) \right. \\
    \pm \left. Q_{i+\frac{1}{2}}(U_r-U_l-\mathcal{A}_{i+\frac{1}{2}}^{-1}S_{i+\frac{1}{2}}(b_r-b_l))\right) \nonumber,
\end{eqnarray}
	
where 
$$\mathcal{A}_{i+\frac{1}{2}}= \left(\begin{array}{cc}
   A_{i+\frac{1}{2}}  & -S_{i+\frac{1}{2}} \\
    0 & 0
\end{array}\right)$$ is a generalized Roe matrix \cite{toumi1992weak} so that $A_{i+\frac{1}{2}} = J_{i+\frac{1}{2}} + B_{i+\frac{1}{2}}$ and $S_{i+\frac{1}{2}}$ have to verify
%{\color{red} I recommend to write only that 
%\[
%A_{i+1/2}=\int_0^1 A(U_l+s(U_r-U_l))\, ds
%\]
%Same for the others...
%}
\begin{equation}\label{A_Roe}
    A_{i+\frac{1}{2}} = \int_{0}^{1}A(U_l+s(U_r-U_l))\, ds,
\end{equation}

\begin{equation}\label{J_Roe}
    J_{i+\frac{1}{2}}(U_r-U_l) = F(U_r) - F(U_l),
\end{equation}

\begin{equation}\label{B_Roe}
    B_{i+\frac{1}{2}} = \int_{0}^{1}B(U_l+s(U_r-U_l))\, ds,
\end{equation}

\begin{equation}\label{S_Roe}
    S_{i+\frac{1}{2}} = \int_{0}^{1}S(U_l+s(U_r-U_l))\, ds,
\end{equation}

and the polynomial viscosity matrix is $Q_{i+\frac{1}{2}} = P(A_{i+\frac{1}{2}})$, for polynomial $P$. The source term evaluates to
$$S_{i+\frac{1}{2}} = \left(\begin{array}{c}
     0  \\
     -g\frac{h_l+h_{r}}{2} \\
     0 \\
     \vdots \\
     0
\end{array}\right).$$
%{\color{red}same comment than before. Please check the notation you are going to use in this section for all the intermediate states and matrices...}

In the case of the model SWLME, it can be shown that the system (\ref{J_Roe}) leads to an evaluation of the Jacobian
\begin{equation}\label{J_Roe_r}
    J_{i+\frac{1}{2}} = \frac{\partial F}{\partial U}(h_{R}, u_{m,R}, \alpha_{1,R}, ..., \alpha_{N,R}),
\end{equation}
at the intermediate values 
$$h_R = \frac{h_l+h_{r}}{2}, \quad u_{m,R} = \frac{\sqrt{h_l}u_{m,l} + \sqrt{h_{r}}u_{m,r}}{\sqrt{h_{l}}+\sqrt{h_{r}}},$$
and
$$\alpha_{j,R} = \frac{\sqrt{h_l}h_{r}\alpha_{j,r} + \sqrt{h_{r}}h_{l}\alpha_{j,l}}{\sqrt{h_{l}}h_{r}+\sqrt{h_{r}}h_{l}}, \quad j=\{1,...,N\}.$$
\begin{observacion}
    We point out that (\ref{J_Roe_r}) is a generalization of the mean values that are obtained with the Roe matrix for the usual Shallow Water equations.
\end{observacion}
From (\ref{B_Roe}) we obtain that $B_{i+\frac{1}{2}}$ is an evaluation of the non-conservative terms
\begin{equation}
    B_{i+\frac{1}{2}} = diag(0,0,-u_{m,b},...,-u_{m,b}),
\end{equation}
at values
    $$
    u_{m,b} = \begin{cases}
	\frac{h_{r}^{2}u_{r}+h_{l}^{2}u_{l}+h_{l}h_{r}\left[(u_{l}-u_{r})log\left(\frac{h_{r}}{h_{l}}\right)-(u_{r}+u_{l})\right]}{(h_{r}-h_{l})^{2}} & \text{if} \ \ h_{r} \neq h_{l}, \\
	\frac{u_{r} + u_{l}}{2} & \text{if} \ \ h_{r} = h_{l}.
	\end{cases}
	$$
Setting $A_{i+\frac{1}{2}} = J_{i+\frac{1}{2}} + B_{i+\frac{1}{2}}$ and $S_{i+\frac{1}{2}}$, it can be shown that $\mathcal{A}_{i+\frac{1}{2}}$ is a Roe matrix in the sense of \cite{toumi1992weak}.\\

For the polynomial viscosity matrix $Q_{i+\frac{1}{2}}$ an HLL-like method that correspond to choosing a polynomial approximation of the matrix $Q$ as $P(x) = a_{0} + a_{1}x$ in (\ref{eq:PVM_method}) is used, see \cite{castro2017well} for more details. The coefficients are given as
$$a_{0} = \frac{S_{r}|S_l|-S_{l}|S_{r}|}{S_{r}-S_{l}}, \quad a_{1} = \frac{|S_r|-|S_{l}|}{S_{r}-S_{l}},$$
where $S_{r}$ and $S_{l}$ are the maximum and the minimum eigenvalue of $A_{i+\frac{1}{2}}$, respectively.

\begin{observacion}
    The eigenvalues of $A_{i+\frac{1}{2}}$ are computed numerically. However, it is possible to use the Cardano's formula to obtain exact eigenvalues.
\end{observacion}

\section{Numerical tests}
\label{sec:results}
In this section several tests with increasing complexity are considered to validate the results obtained starting from steady state initial conditions with the well-balanced first and second order schemes for the SWLME. Subsequently, we use a transient dam-break problem to compare the SWLME with the results obtained for the HSWME and the $\beta$HSWME, see \cite{Koellermeier2020}. 
For implementation details used in all examples of this section we refer to the implementation \cite{Pimentel2020}.

\subsection{Well-balanced property}
The first four test cases are intended to show that the scheme is effectively well-balanced. A 1000-point uniform mesh, free boundary conditions and a CFL number of $0.5$ are used. In all cases we exemplarily use $N=8$ moments and $g=9.812$.

\subsubsection*{Test 1: Lake at rest}
For the lake at rest, a zero velocity profile corresponding to water at rest with the following bottom topography is used in the spatial domain $[-1,1]$
\begin{equation}\label{eq:Test1_a}
    b_{0}(x) = \left\{\begin{array}{ll}
        2-x^{2} & \text{if} \ \ -0.5<x<0.5, \\
        1.75 & \text{otherwise},
\end{array}\right.
\end{equation}
and therefore
\begin{equation}\label{eq:Test1_b}
    W_{0}(x) = (h_{0}(x),u_{m,0}(x)h_{0}(x),\alpha_{1,0}(x)h_{0}(x),...,\alpha_{N,0}(x)h_{0}(x) ) = (3-b_{0}(x), 0, 0,...,0).
\end{equation}

The initial condition is shown in Figure \ref{fig:Test1_Initial_condition_hHH}. In Table \ref{tab:Error_Test1} we observe that the well-balanced and also the non well-balanced schemes of first and second order capture well the lake at rest. This is due to the fact that straight lines are used as the paths in the numerical scheme. This is a parameterization of the stationary solutions \cite{bermudez1994upwind,pares2004well}. For the first order test case, even the standard non well-balanced scheme gives the right solution 

\begin{table}[ht]
  	\centering
  	\begin{tabular}{|c|c|c|c|c|}
  		\hline 
  		Scheme (1000 cells) & $||\Delta h||_1$ (1st) & $||\Delta u||_1$ (1st) & $||\Delta h||_1$ (2nd) & $||\Delta u||_1$ (2nd) \\   
  		\hline 
  		Well-balanced & 0.00 & 8.16e-16 & 0.00 & 8.16e-16  \\ 
  		\hline 
  		Non well-balanced & 0.00 & 7.12e-16 & 4.51e-15 & 1.75e-14 \\ 
  		\hline 
  	\end{tabular} 
	  	\caption{Well-balanced vs non well-balanced schemes: $L^{1}$ errors $||\Delta \cdot||_1$ at time $t=0.5$ for the SWLME model with initial conditions (\ref{eq:Test1_a}) and (\ref{eq:Test1_b}).}
  	\label{tab:Error_Test1}
\end{table} 

\begin{figure}[htbp]
    \centering
        \includegraphics[width=0.55 \textwidth]{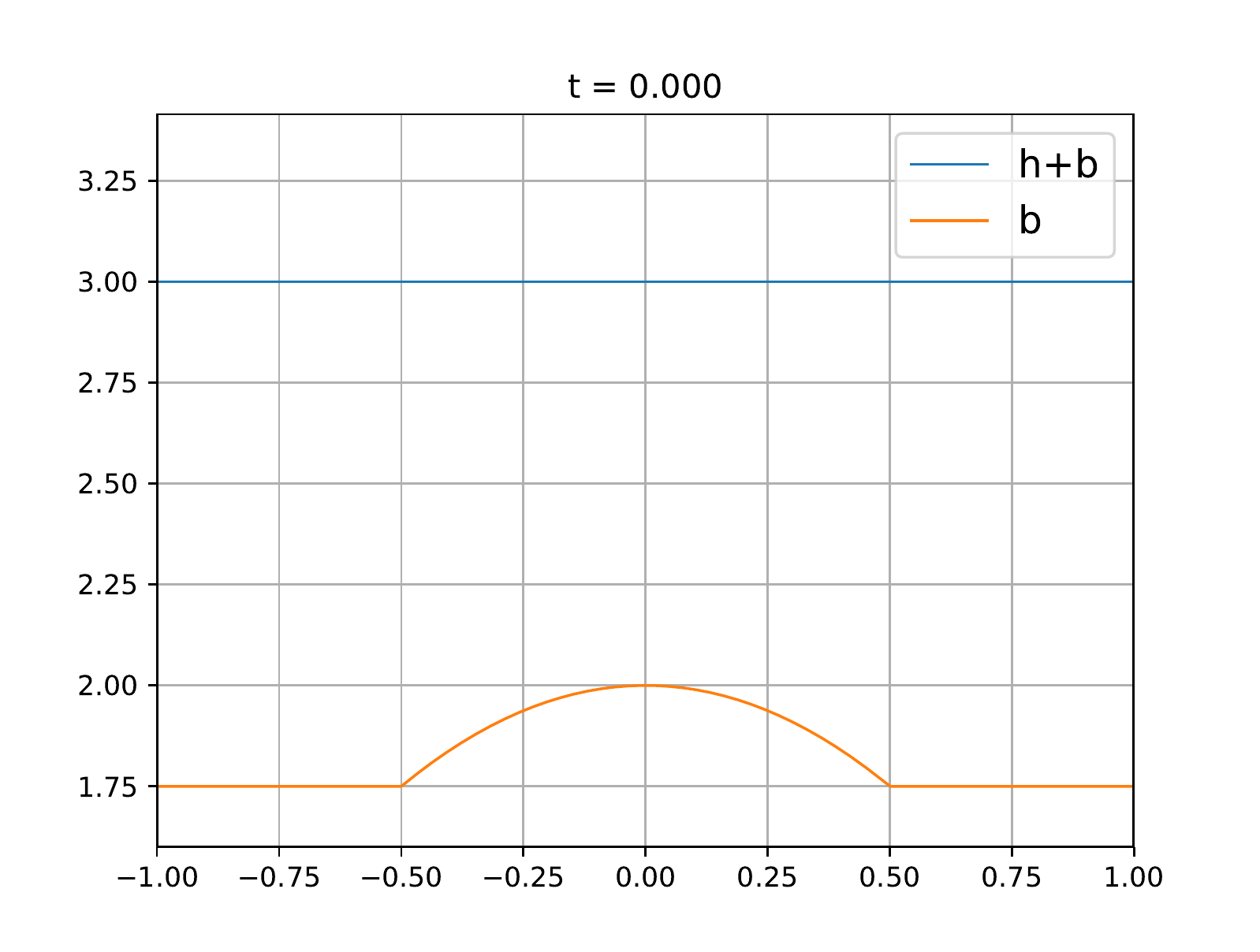}
        \caption{Initial condition for the lake at rest (\ref{eq:Test1_a}) and (\ref{eq:Test1_b}).}
        \label{fig:Test1_Initial_condition_hHH}
\end{figure}

\subsubsection*{Test 2: Subcritical stationary solution}
We consider a subcritical stationary solution as initial condition in the spatial domain $[0,3]$, similar to \cite{diaz2013high}. The bottom topography is chosen as
\begin{equation}\label{eq:Test2}
    b_{0}(x) = \left\{\begin{array}{ll}
        0.25(1+cos(5\pi(x+0.5))) & \text{if} \ \ 1.3<x<1.7, \\
        0 & \text{otherwise}.
    \end{array}\right.
\end{equation}
As $W_{0}(x)$ we take the subcritical stationary solution such that $C_1 = 3.5$, $C_2 = 17.56957396120237$ and $C_i = 0$ for $i\in\{3,...,N+2\}$. The initial condition is shown in Figure \ref{fig:Test2_Initial_condition}. In Table \ref{tab:Error_Test2} we observe that our well-balanced schemes of first and second order capture well the subcritical stationary solution while the non well-balanced schemes do not. The non well-balanced scheme shows a clear error whereas the well-balanced scheme is exact up to almost machine prevision.

\begin{table}[ht]
  	\centering
  	\begin{tabular}{|c|c|c|c|c|}
  		\hline 
  		Scheme (1000 cells) & $||\Delta h||_1$ (1st) & $||\Delta u||_1$ (1st) & $||\Delta h||_1$ (2nd) & $||\Delta u||_1$ (2nd) \\  
  		\hline 
  		Well-balanced & 9.16e-16 & 1.79e-15 & 1.42e-15 & 3.24e-15  \\ 
  		\hline 
  		Non well-balanced & 2.48e-6 & 5.08e-6 & 3.21e-5 & 8.40e-5 \\ 
  		
  		\hline 
  	\end{tabular} 
	  	\caption{Well-balanced vs non well-balanced schemes: $L^{1}$ errors $||\Delta \cdot||_1$ at time $t=0.5$ for the SWLME model with initial condition (\ref{eq:Test2}).}

  	\label{tab:Error_Test2}
\end{table}

\begin{figure}[h]
		\begin{subfigure}{0.5\textwidth}
			\includegraphics[width=1.1\linewidth]{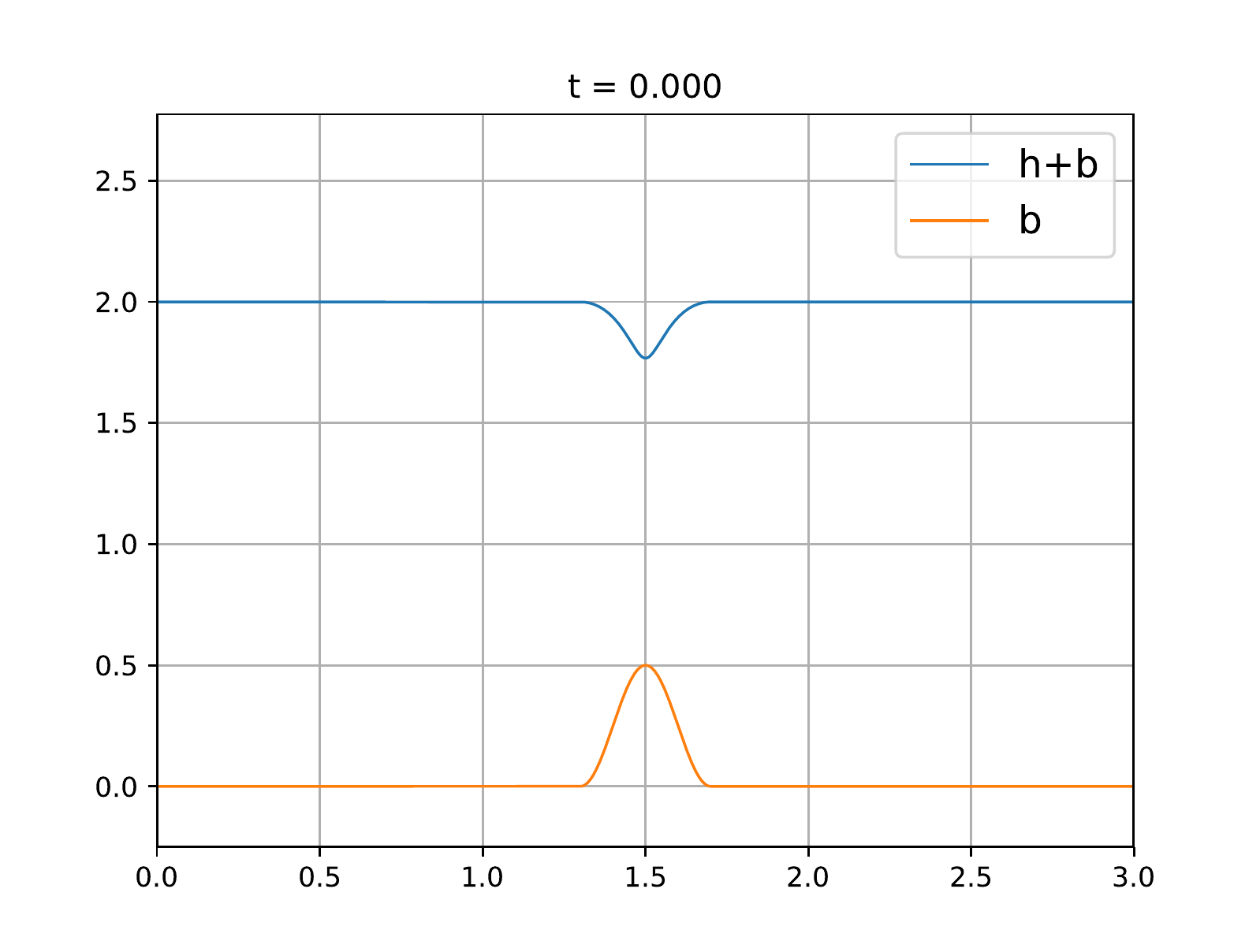}
		\end{subfigure}
		\begin{subfigure}{0.5\textwidth}
				\includegraphics[width=1.1\linewidth]{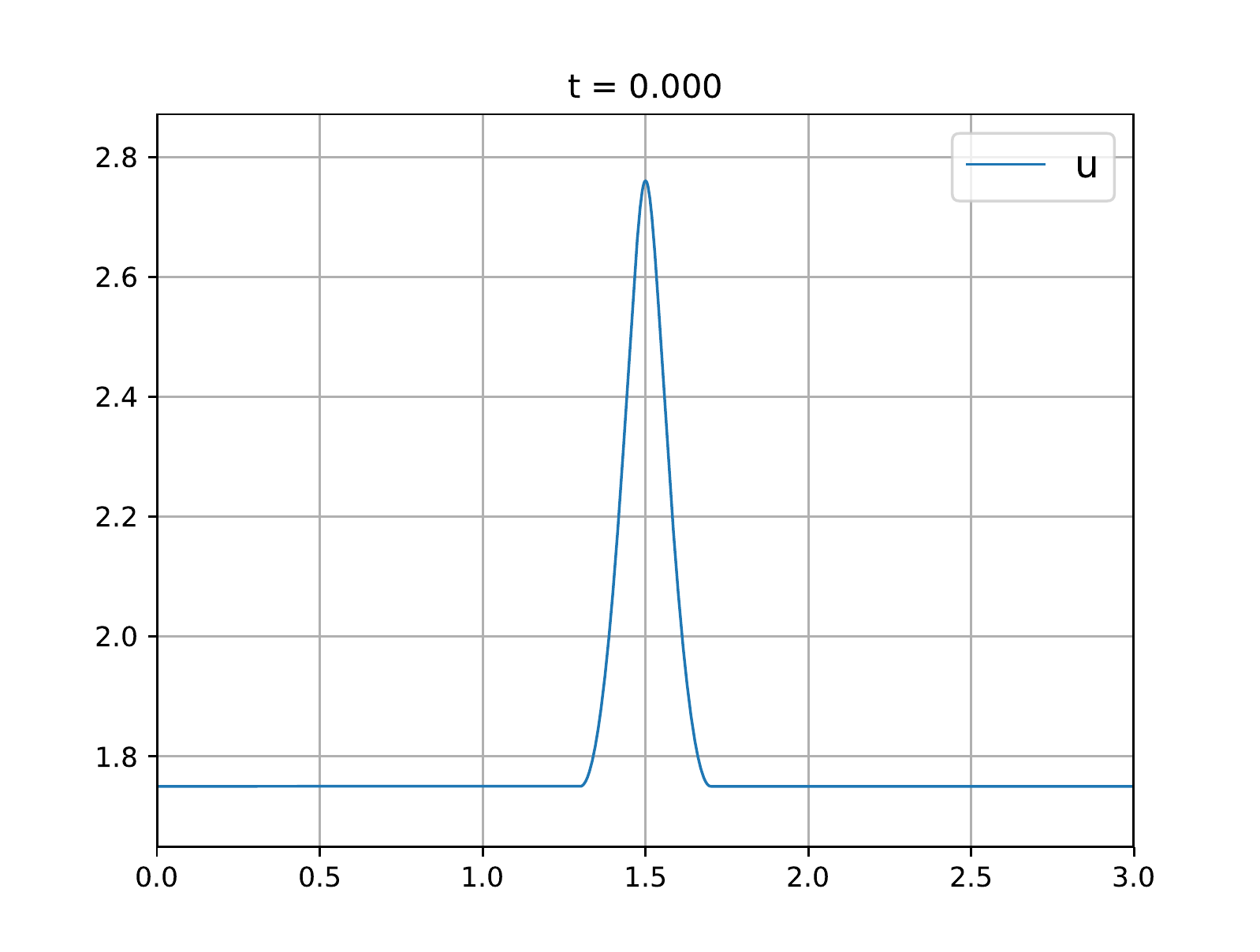}
		\end{subfigure}
		\caption{Initial condition for the subcritical stationary solution (\ref{eq:Test2}).}
		\label{fig:Test2_Initial_condition}
\end{figure}

\subsubsection*{Test 3: Transcritical stationary solution}
Next, we consider a transcritical stationary solution using an initial condition in $[0,3]$ similar to  \cite{diaz2013high}. The bottom topography is chosen as
\begin{equation}\label{eq:Test3_a}
    b_{0}(x) = \left\{\begin{array}{ll}
        0.25(1+cos(5\pi(x+0.5))) & \text{if} \ \ 1.3<x<1.7, \\
        0 & \text{otherwise}.
\end{array}\right.
\end{equation}
As $W_{0}(x)$ we take the transcritical stationary solution 
\begin{equation}\label{eq:Test3_b}
    W_{0}(x) = \left\{\begin{array}{ll}
        W_{*}(x) & \text{if} \ \ x<1.5 \\
        W^{*}(x) & \text{if} \ \ x>1.5
\end{array}\right.
\end{equation}
where $W_{*}$ and $W^{*}$ are the subcritical and supercritical stationary solutions such that $C_1 = 2.5$, $C_2 = 21,15525$ and $C_i = 0$ for $i\in\{3,...,N+2\}$. The initial condition is shown in Figure \ref{fig:Test3_Initial_condition}. In Table \ref{tab:Error_Test3} we observe that our well-balanced schemes of first and second order capture well the transcritical stationary solution while the non well-balanced schemes do not. Again, the non well-balanced schemes result in a large error while the well-balanced schemes achieve a very accurate steady state solution.

\begin{table}[ht]
  	\centering
  	\begin{tabular}{|c|c|c|c|c|}
  		\hline 
  		Scheme (1000 cells) & $||\Delta h||_1$ (1st) & $||\Delta u||_1$ (1st) & $||\Delta h||_1$ (2nd) & $||\Delta u||_1$ (2nd) \\ 
  		\hline 
  		Well-balanced & 3.53e-14 & 2.95e-13 & 3.53e-14 & 2.98e-13  \\ 
  		\hline 
  		Non well-balanced & 1.46e-5 & 1.22e-4 & 3.07e-4 & 1.12e-3 \\ 
  		
  		\hline 
  	\end{tabular} 
	  	\caption{Well-balanced vs non well-balanced schemes: $L^{1}$ errors $||\Delta \cdot||_1$ at time $t=0.5$ for the SWLME model with initial condition (\ref{eq:Test3_a}) and (\ref{eq:Test3_b}).}
  	\label{tab:Error_Test3}
\end{table}

\begin{figure}[h]
		\begin{subfigure}{0.5\textwidth}
			\includegraphics[width=1.1\linewidth]{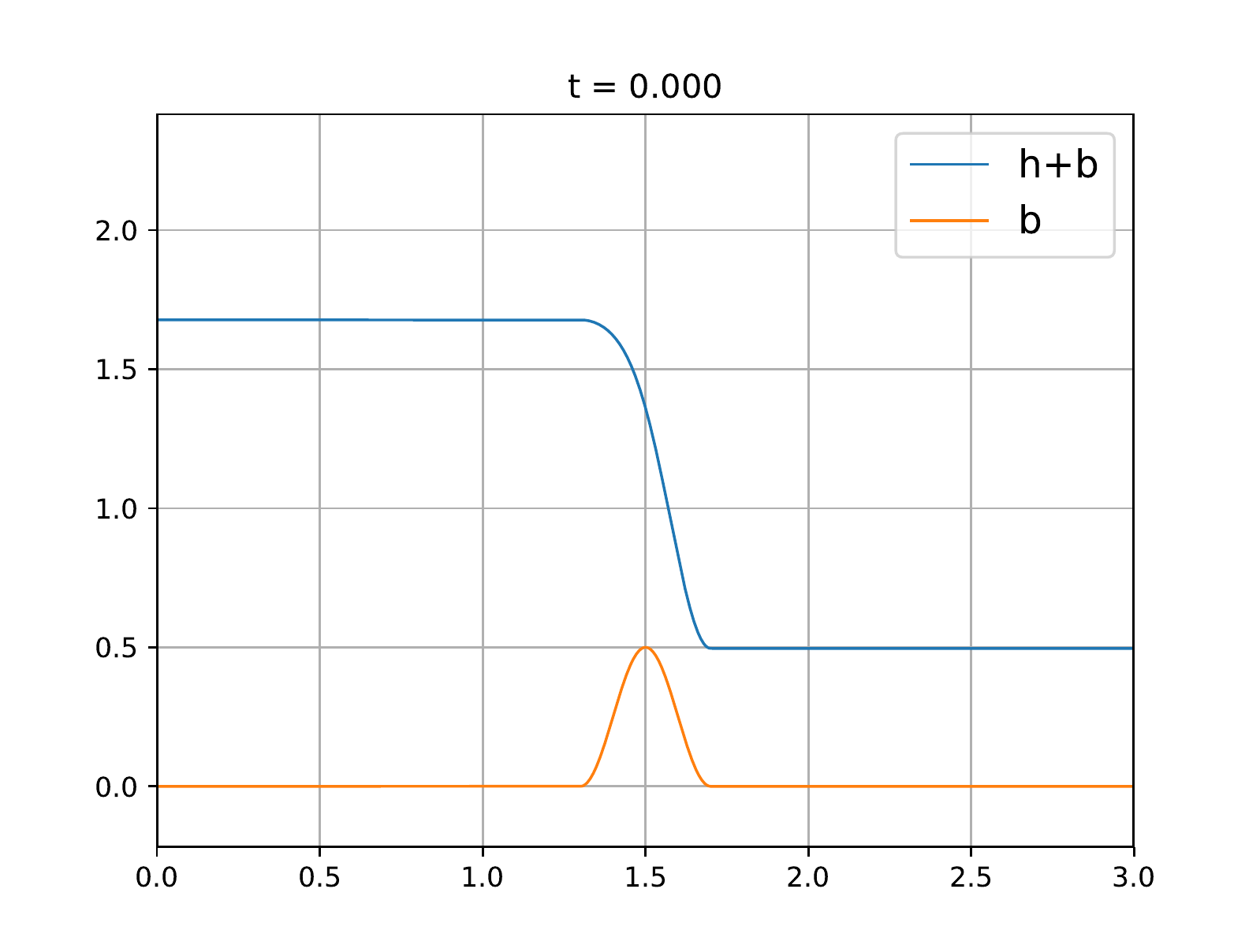}
		\end{subfigure}
		\begin{subfigure}{0.5\textwidth}
				\includegraphics[width=1.1\linewidth]{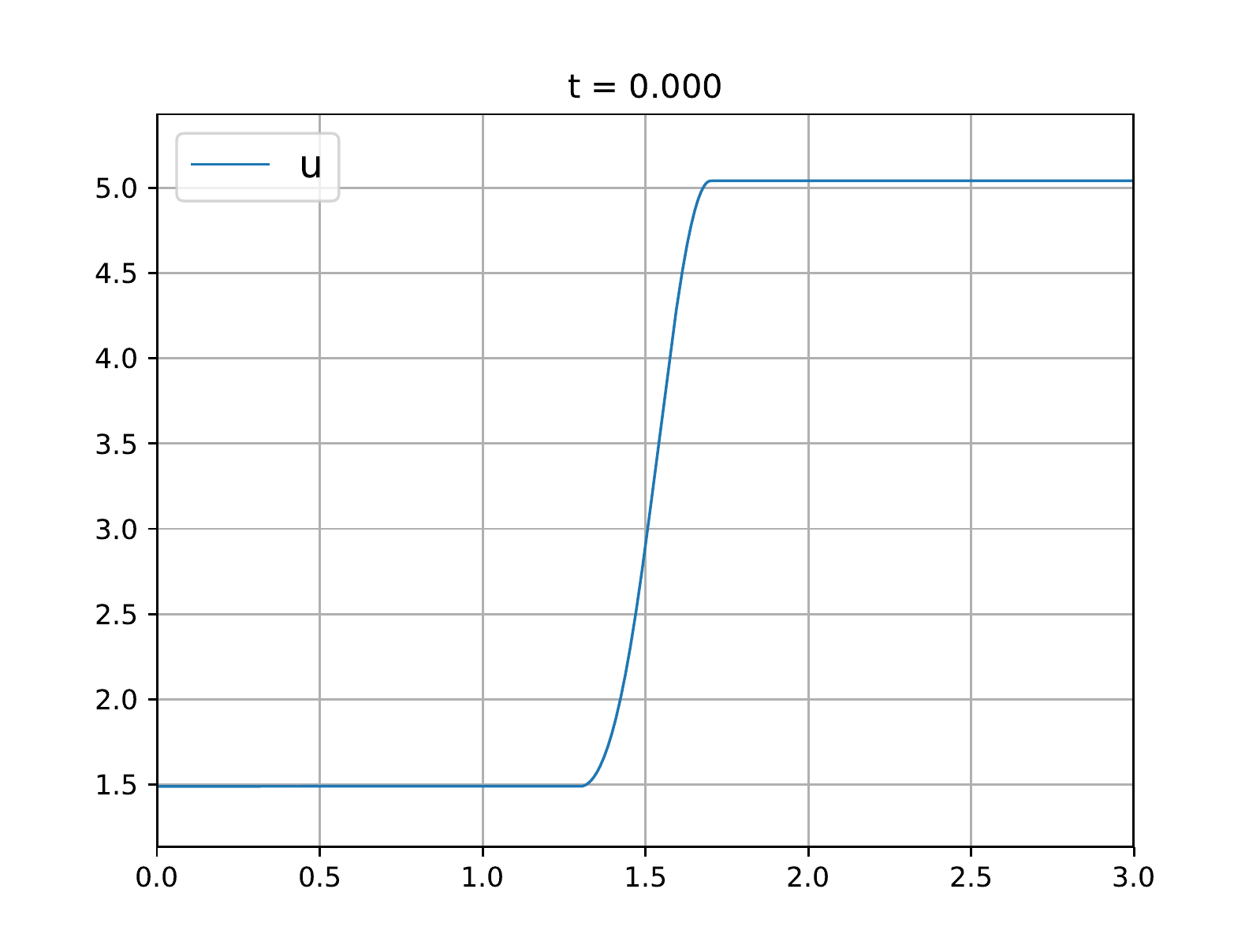}
		\end{subfigure}
		\caption{Initial condition for the subcritical stationary solution (\ref{eq:Test3_a}) and (\ref{eq:Test3_b}). }
		\label{fig:Test3_Initial_condition}
\end{figure}

\subsubsection*{Test 4: Subcritical stationary solution with non zero moments}
Lastly, we consider the following initial condition in $[0,3]$ that is a subcritical stationary solution with non-vanishing coefficients $\alpha_i$. The bottom topography is chosen as
\begin{equation}\label{eq:Test4}
    b_{0}(x) = \left\{\begin{array}{ll}
        0.25(1+cos(5\pi(x+0.5))) & \text{if} \ \ 1.3<x<1.7 \\
        0 & \text{otherwise}
\end{array}\right.
\end{equation}
As $W_{0}(x)$ we use the subcritical stationary solution such that $C_1 = 3.5$, $C_2 = 21,15525$ and $C_i = 0.25$ for $i\in\{3,...,N+2\}$. The initial condition is shown in Figure \ref{fig:Test4_Initial_condition}. In Table \ref{tab:Error_Test4} we observe that our well-balanced schemes of first and second order capture well the subcritical stationary solution while the non well-balanced schemes do not. Even in this test case with non-zero higher-order coefficients $\alpha_i$ the well-balanced scheme is much more accurate than the standard non well-balanced version.
\begin{table}[ht]
  	\centering
  	\begin{tabular}{|c|c|c|c|c|c|c|}
  		\hline 
  		Scheme & $||\Delta h||_1$, 1st & $||\Delta u||_1$ (1st) & $|| \Delta \alpha_{i}||_1$ (1st) & $||\Delta h||_1$ (2nd)& $||\Delta u||_1$ (2nd) & $|| \Delta \alpha_{i}||_1$ (2nd) \\ 
  		\hline 
  		wb & 4.00e-15 & 9.71e-15 & 4.45e-15 & 2.56e-15 & 7.66e-15 & 5.04e-15 \\ 
  		\hline 
  		Non wb & 3.11e-6 & 6.65e-6 & 6.98e-7 & 3.98e-5 & 1.04e-4 & 2.52e-5 \\ 
  		
  		\hline 
  	\end{tabular} 
	  	\caption{Well-balanced (WB) vs non well-balanced schemes: $L^{1}$ errors $||\Delta \cdot||_1$ at time $t=0.5$ for the SWLME model with initial condition (\ref{eq:Test4}).}

  	\label{tab:Error_Test4}
\end{table}

\begin{figure}[h]
		\begin{subfigure}{0.5\textwidth}
			\includegraphics[width=1.1\linewidth]{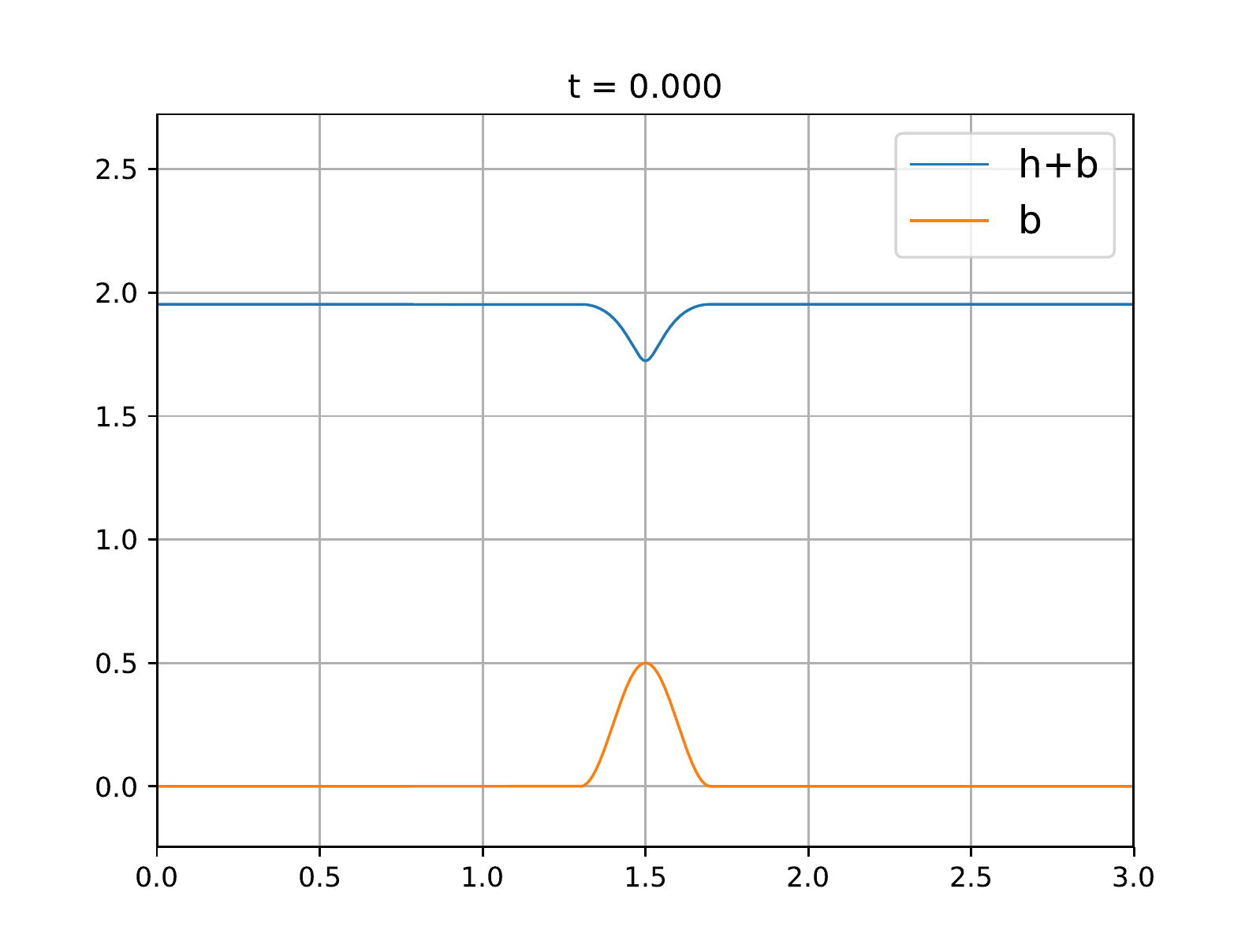}
		\end{subfigure}
		\begin{subfigure}{0.5\textwidth}
				\includegraphics[width=1.1\linewidth]{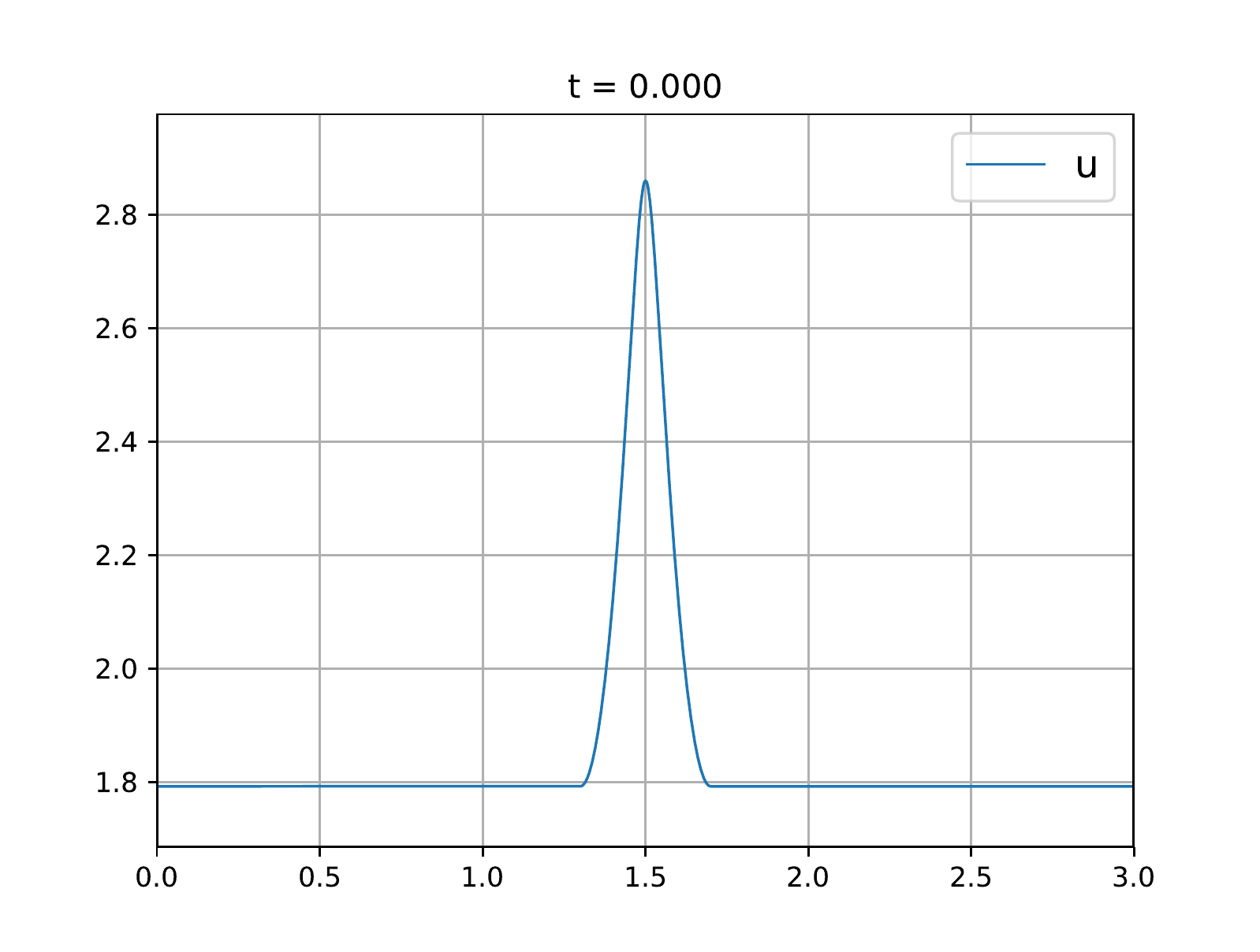}
		\end{subfigure}
		\center{\begin{subfigure}{0.5\textwidth}
				\includegraphics[width=1.1\linewidth]{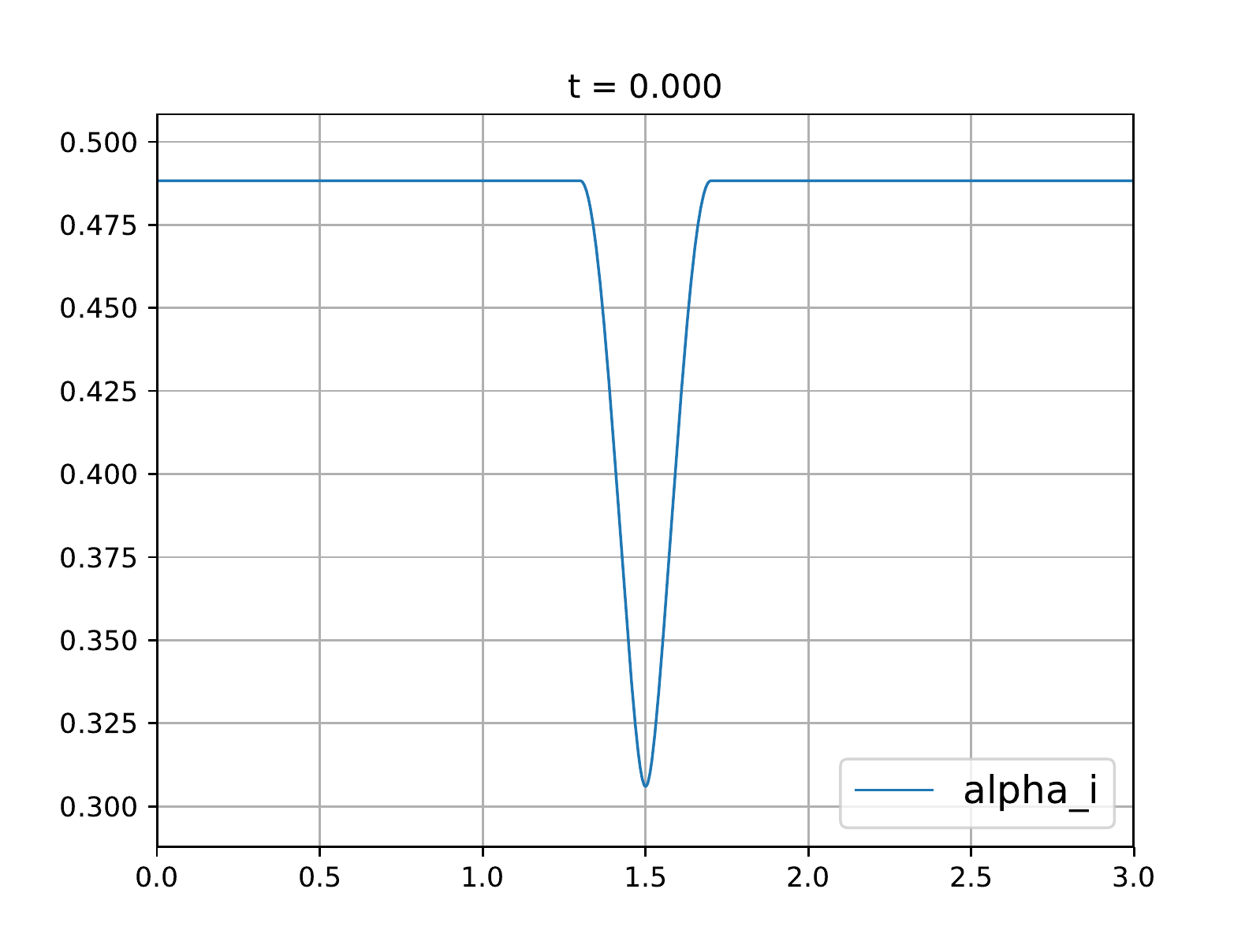}
		\end{subfigure}}
		\caption{Initial condition for the subcritical stationary solution (\ref{eq:Test4}).}
		\label{fig:Test4_Initial_condition}
\end{figure}

\subsection{Comparison between the SWLME, HSWME and $\beta$HSWME}
In the following two tests, the results for the new SWLME model are compared with other hyperbolic models, HSWME and $\beta$HSWME, for which a Roe matrix was derived and explicitly given in the appendix \ref{app}. These tests will be done in the spatial domain $[-0.4,0.4]$ with $g=1$ and $N=8$. We consider a flat bottom topography ($b_x=0$) and neglect friction terms. In this test case, the well-balanced property is of no interest, we therefore only compare the standard first and second order schemes.

\subsubsection*{Test 5: transient model comparison with standard dam-break test}
We are going to consider the following dam-break initial condition taken from \cite{Koellermeier2020} without friction terms
\begin{equation}\label{eq:Test5_a}
    W_{0}(x) = (h_{0}(x),u_{m,0}(x)h_{0}(x),\alpha_{1,0}(x)h_{0}(x),...,\alpha_{N,0}(x)h_{0}(x) ), %= (h_{0}(x), 0.25h_{0}(x),-0.25h_{0}(x), 0, 0,...0,0.25h_{0}(x)),
\end{equation}
where $u_{m,0}(x) = 0.25$, $\alpha_{1,0}(x)=-0.25$, $\alpha_{N,0}(x)=0.25$, $\alpha_{i,0}(x)=0, i\in\{2,...,N-1\}$, and
\begin{equation}\label{eq:Test5_b}
    h_{0}(x) = \left\{\begin{array}{ll}
        5 & \text{if} \ \ x<0, \\
        1 & \text{if} \ \ x>0.
        \end{array}\right.
\end{equation}

Figure \ref{O1_noWB_SWLME_vs_noWB_HSWME_vs_noWB_betaHSWME_Test5_t01} shows the numerical results obtained with the first and second order scheme for the SWLME, and the first order schemes for the HSWME and the $\beta$HSWME. The results for the second order schemes applied to the latter two models are quantitatively the same as the first order results and thus omitted here. We can conclude that the results obtained are quite similar for all models in the variables $h,u$ and $\alpha_1$. As expected, the second order scheme captures the rarefaction waves better. We point out that the speed of the shock that travels from the left to the right is slightly higher in the case of the SWLME than in the other two models because in (\ref{SWLME_eigenvalues}) we observe that all the $\alpha_{i}$ are taken into account for the maximum and minimum eigenvalues while in the HSWME and $\beta$HSWME only $\alpha_{1}$ is contributing.

\begin{figure}[h]
		\begin{subfigure}{0.5\textwidth}
			\includegraphics[width=1.1\linewidth]{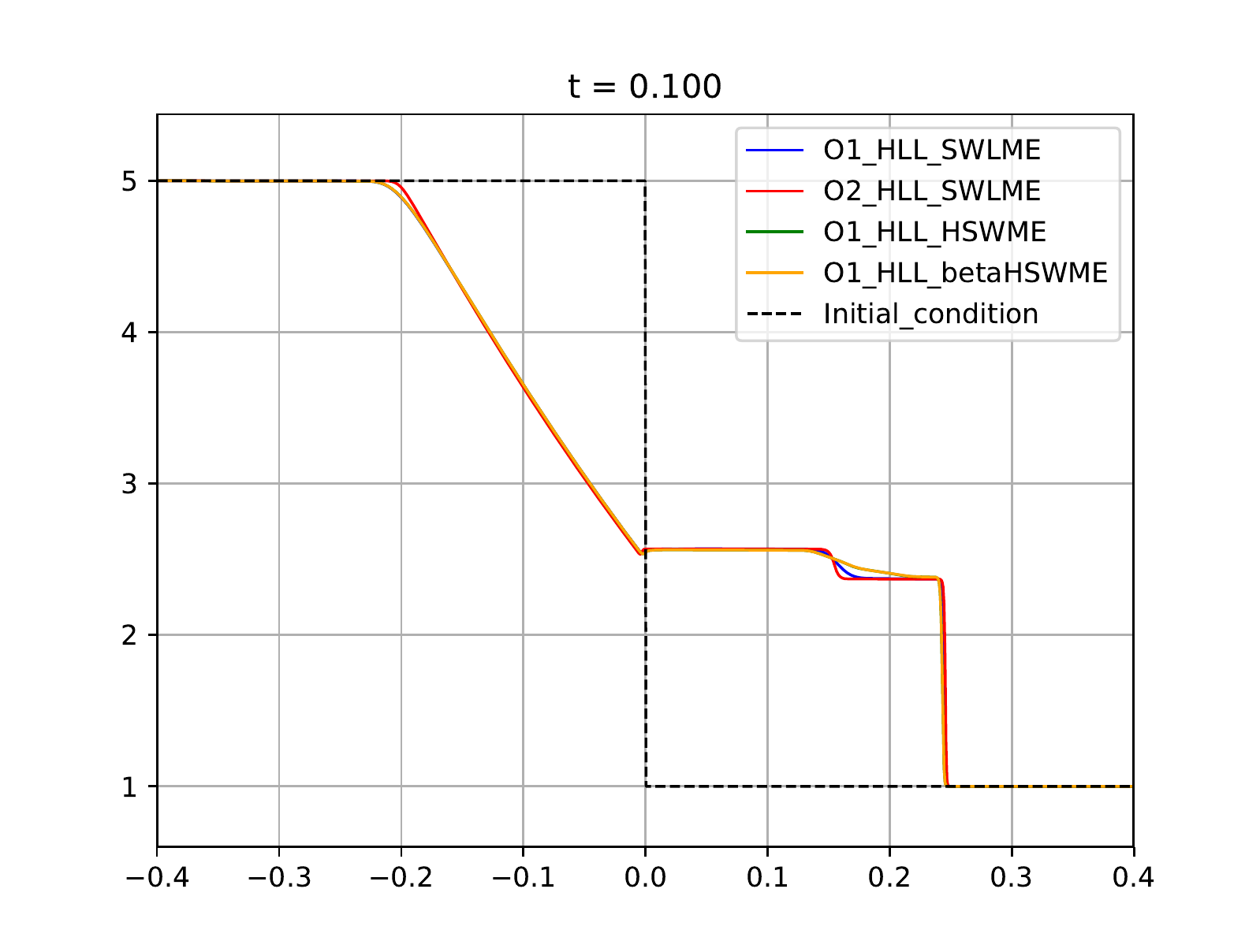}
			\caption{Water height $h$.}
		\end{subfigure}
		\begin{subfigure}{0.5\textwidth}
				\includegraphics[width=1.1\linewidth]{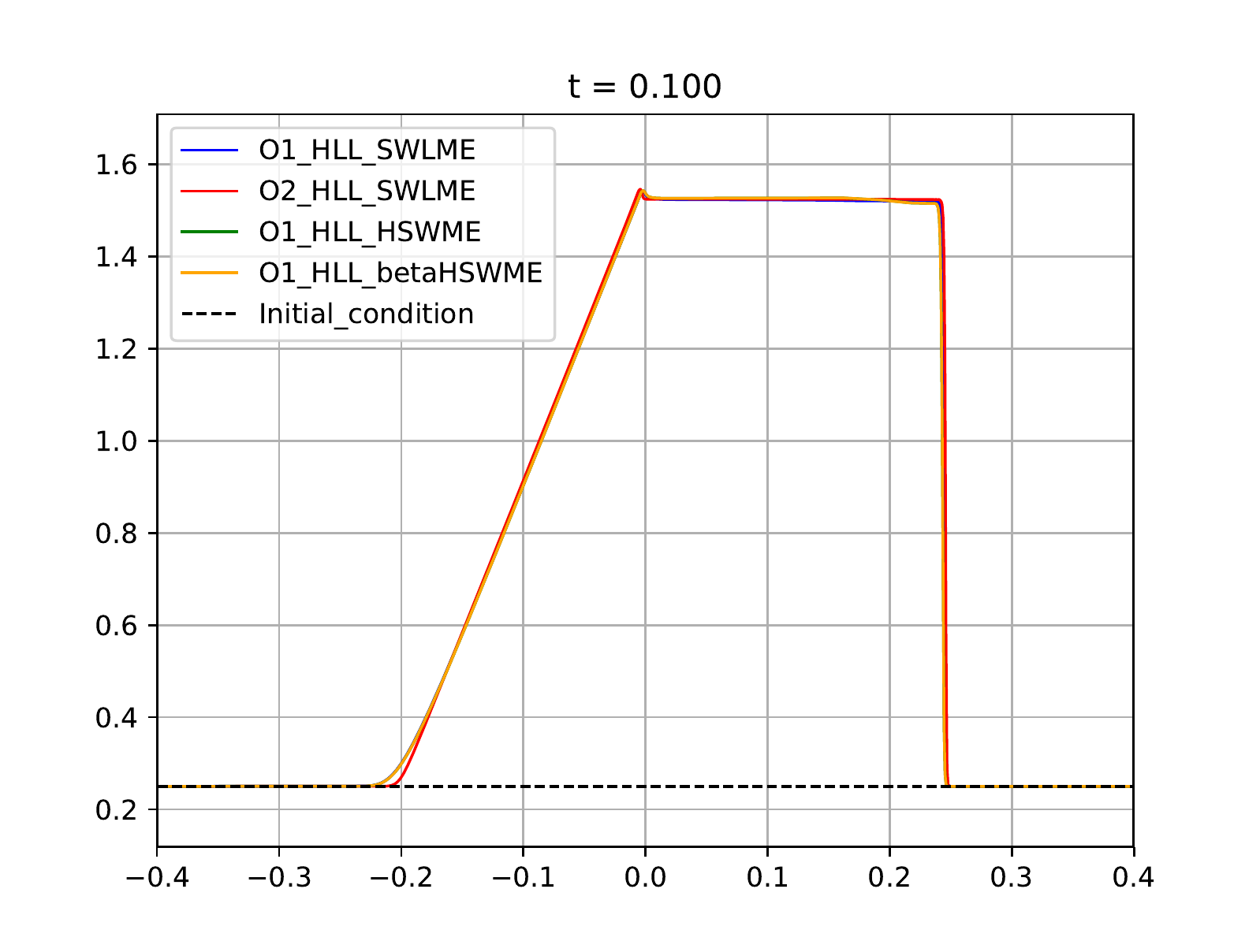}
				\caption{Velocity $u$.}
		\end{subfigure}
		\newline
		\begin{subfigure}{0.5\textwidth}
				\includegraphics[width=1.1\linewidth]{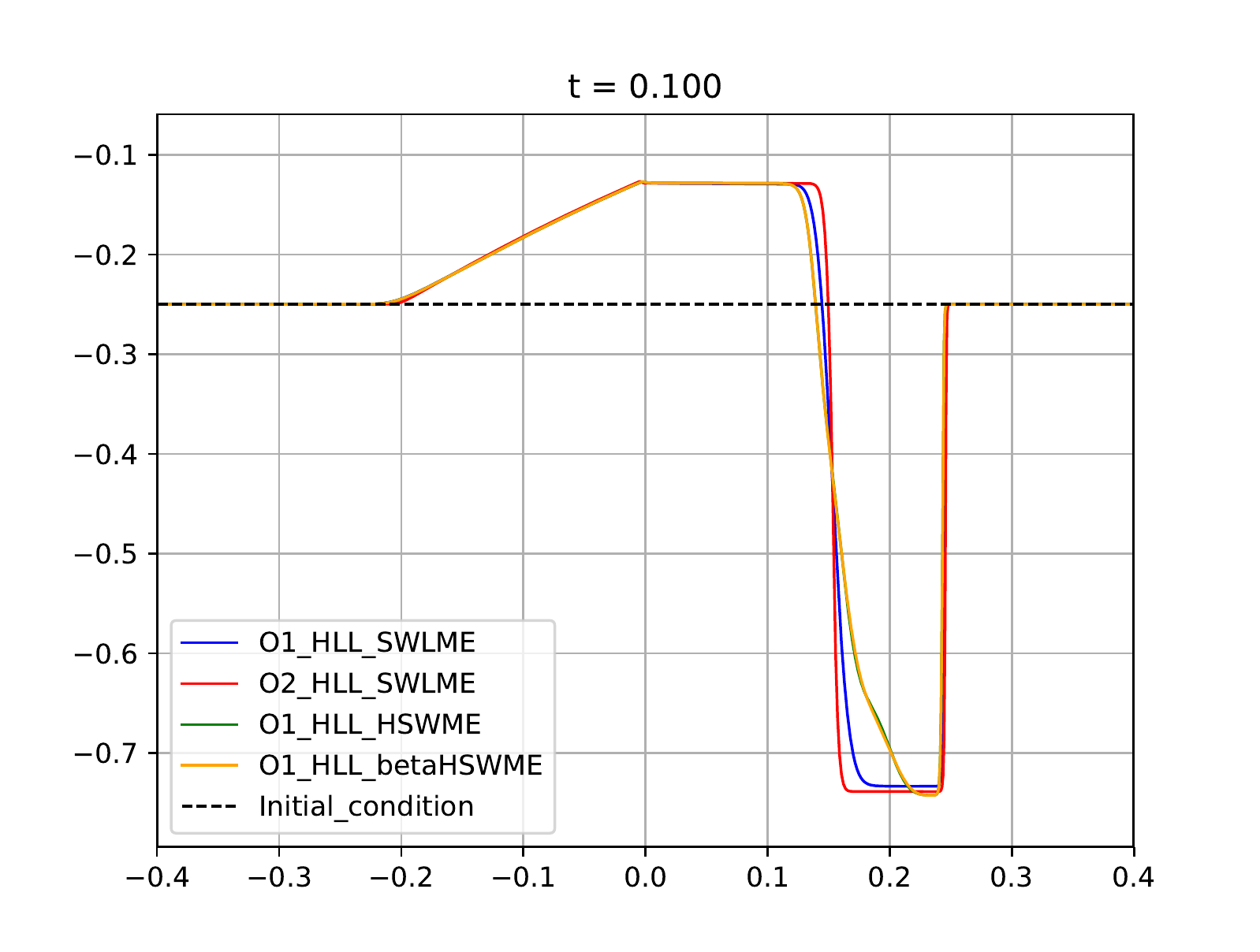}
				\caption{First coefficient $\alpha_1$.}
		\end{subfigure}
		\begin{subfigure}{0.5\textwidth}
				\includegraphics[width=1.1\linewidth]{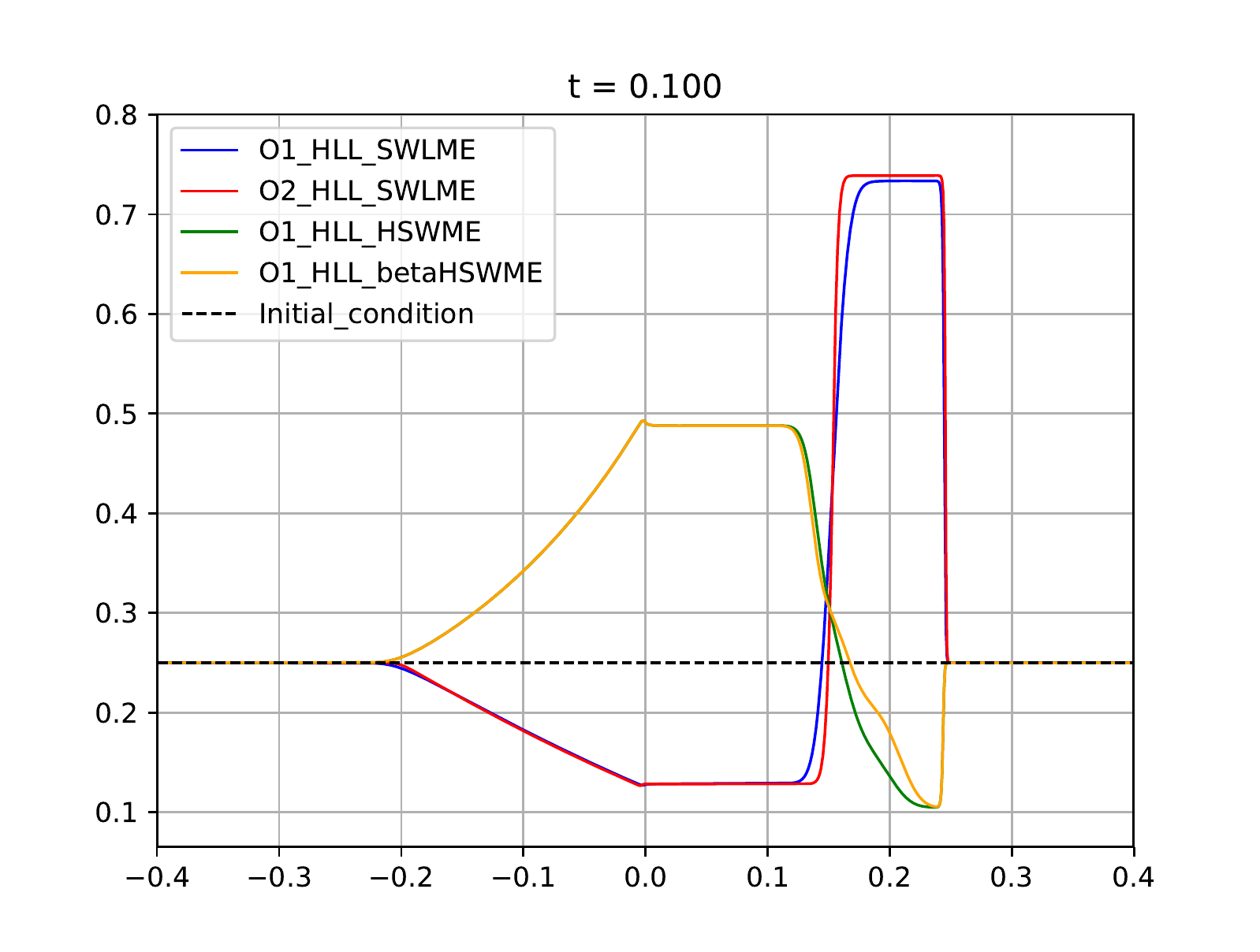}
				\caption{Last coefficient $\alpha_8$.}
		\end{subfigure}
		\caption{Results obtained with the different models for the standard dam-break initial condition (\ref{eq:Test5_a}) and (\ref{eq:Test5_b}) for variables $h,u,\alpha_1$, $\alpha_{8}$ at t=0.1.}
		\label{O1_noWB_SWLME_vs_noWB_HSWME_vs_noWB_betaHSWME_Test5_t01}
\end{figure}

\subsubsection*{Test 6: transient model comparison with square root velocity profile}
For the last test, we consider the following dam-break initial condition:
\begin{equation}\label{eq:Test6_a}
    W_{0}(x) = (h_{0}(x),u_{m,0}(x)h_{0}(x),\alpha_{1,0}(x)h_{0}(x),...,\alpha_{N,0}(x)h_{0}(x) ), %= (h_{0}(x), 0.25h_{0}(x),-0.25h_{0}(x), 0, 0,...0,0.25h_{0}(x)),
\end{equation}
where we use a square root initial velocity profile \eqref{expansion} $u(0,x,\zeta)=u_m(0,x)+\sum_{j=1}^{N}\alpha_j(0,x)\phi_j(\zeta) = \sqrt{\zeta}$, such that the initial variables can be computed according to \eqref{e:IC_u} and \eqref{e:IC_alpha} as $u_{m,0}(x) = 1$ and
\begin{equation}
\begin{array}{cccc} 
    \alpha_{1,0}(x)=-\frac{3}{5}, & \alpha_{2,0}(x)=-\frac{1}{7}, & \alpha_{3,0}(x)=-\frac{1}{15}, & \alpha_{4,0}(x)=-\frac{3}{77},\\ 
    \alpha_{5,0}(x)=-\frac{1}{39}, & \alpha_{6,0}(x)=-\frac{1}{55}, & \alpha_{7,0}(x)=-\frac{3}{221}, & \alpha_{8,0}(x)=-\frac{1}{95}.
\end{array}
\end{equation}
The initial water height is chosen as
\begin{equation}\label{eq:Test6_b}
    h_{0}(x) = \left\{\begin{array}{ll}
        5 & \text{if} \ \ x<0, \\
        1 & \text{if} \ \ x>0.
        \end{array}\right.
\end{equation}

In Figure \ref{O1_noWB_SWLME_vs_noWB_HSWME_vs_noWB_betaHSWME_Test6_t01} we show the numerical results obtained with the first and second order scheme for the SWLME, and the first order schemes for the HSWME and the $\beta$HSWME. We can conclude that the results obtained are quite similar for all of them in the variables $h,u$ and $\alpha_1$. This is not the case for the variable $\alpha_8$ where we can see that both the HSWME and the $\beta$HSWME result in strong oscillations. In comparison, the new SWLME is more stable than the other two models. Again the second order scheme captures the rarefaction waves better. Note that the emerging instability is not the result of an unstable high-order scheme, as the solutions for HSWME and $\beta$HSWME are even unstable with the first order scheme, while the SWLME yields stable results for both schemes. We point out that in this test the difference between the speed of the shock is even higher in the SWLME than in the other test because this time all the $\alpha_i$ have a non-zero initial value.
\begin{figure}[h]
		\begin{subfigure}{0.5\textwidth}
			\includegraphics[width=1.1\linewidth]{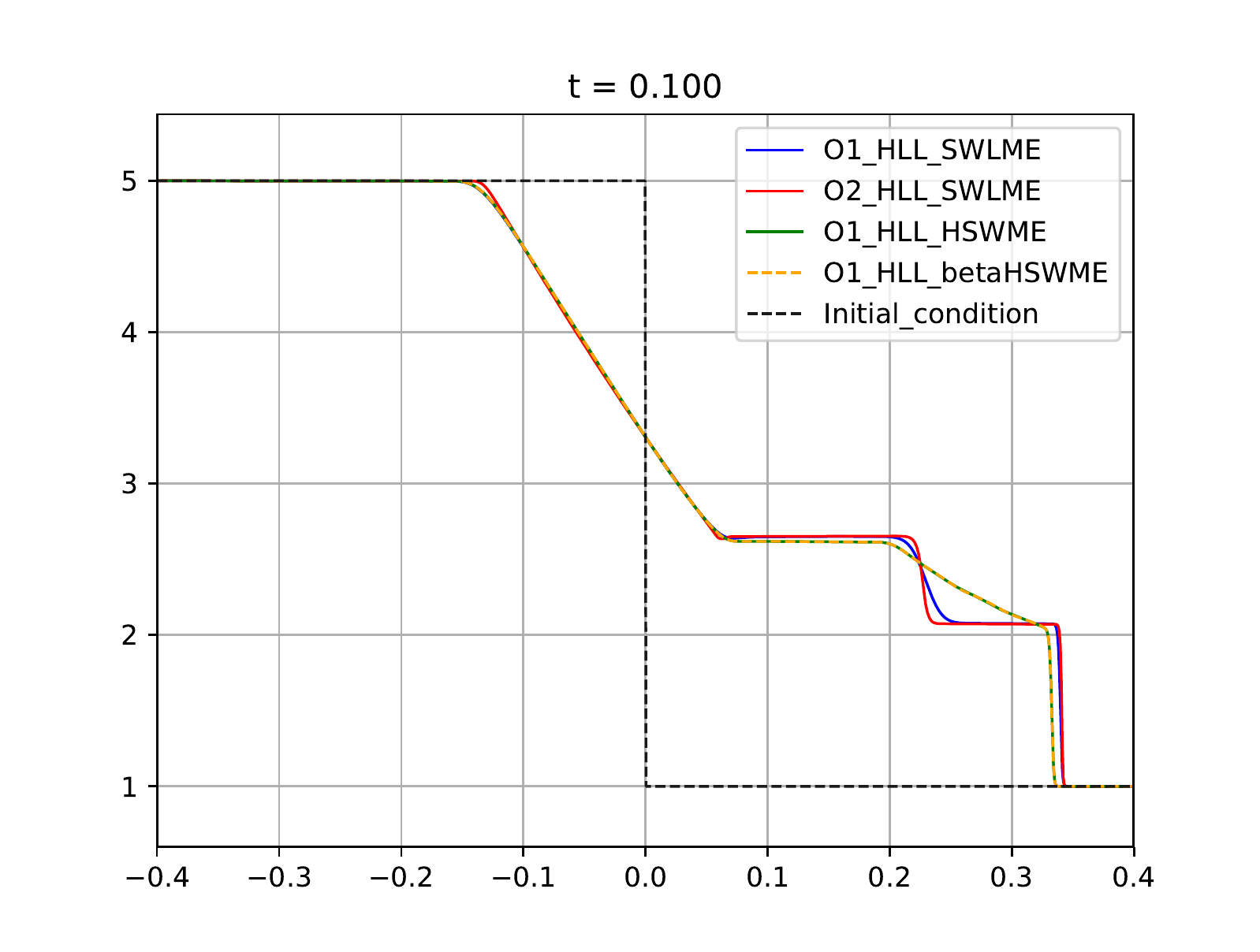}
			\caption{Water height $h$.}
		\end{subfigure}
		\begin{subfigure}{0.5\textwidth}
				\includegraphics[width=1.1\linewidth]{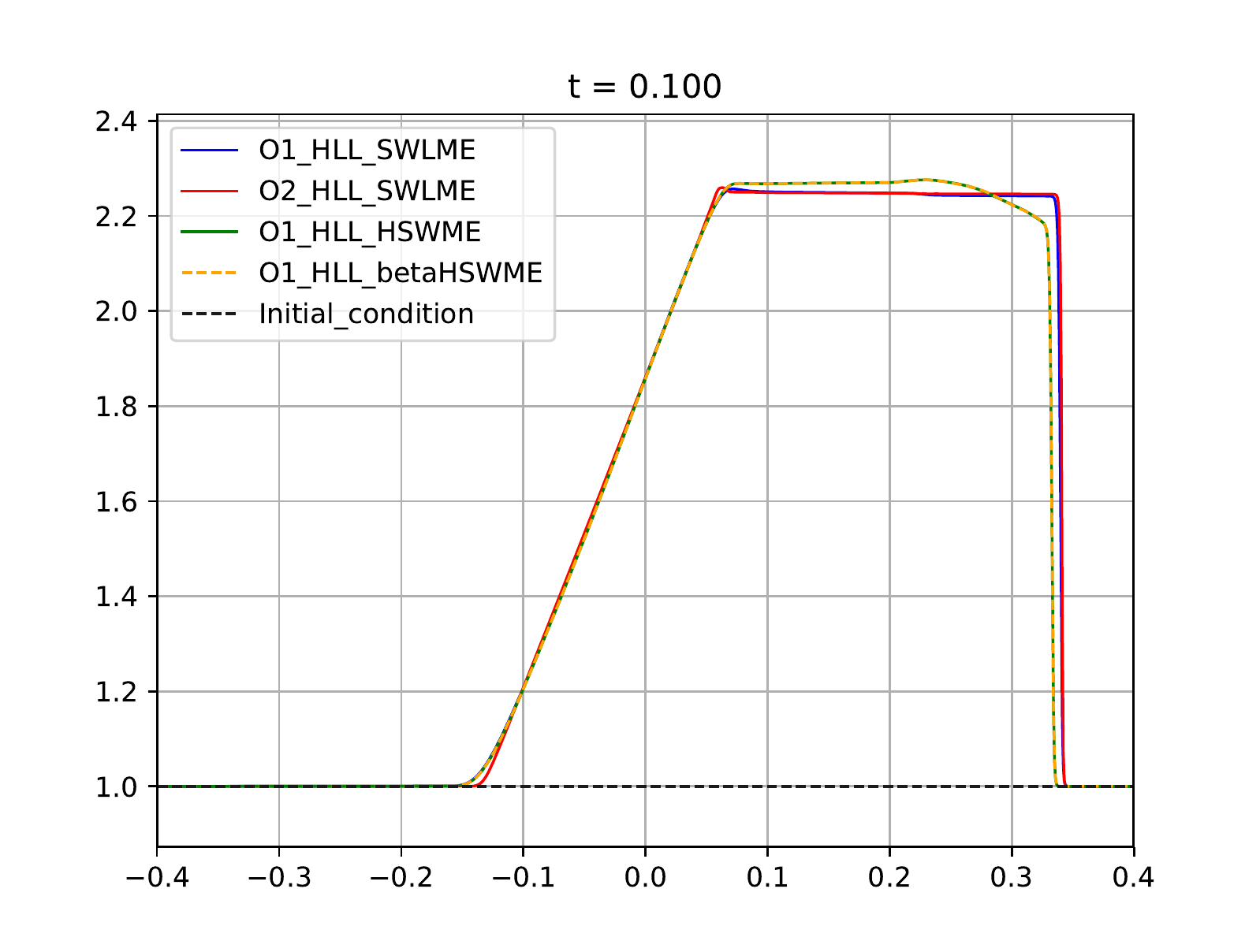}
				\caption{Velocity $u$.}
		\end{subfigure}
		\newline
		\begin{subfigure}{0.5\textwidth}
				\includegraphics[width=1.1\linewidth]{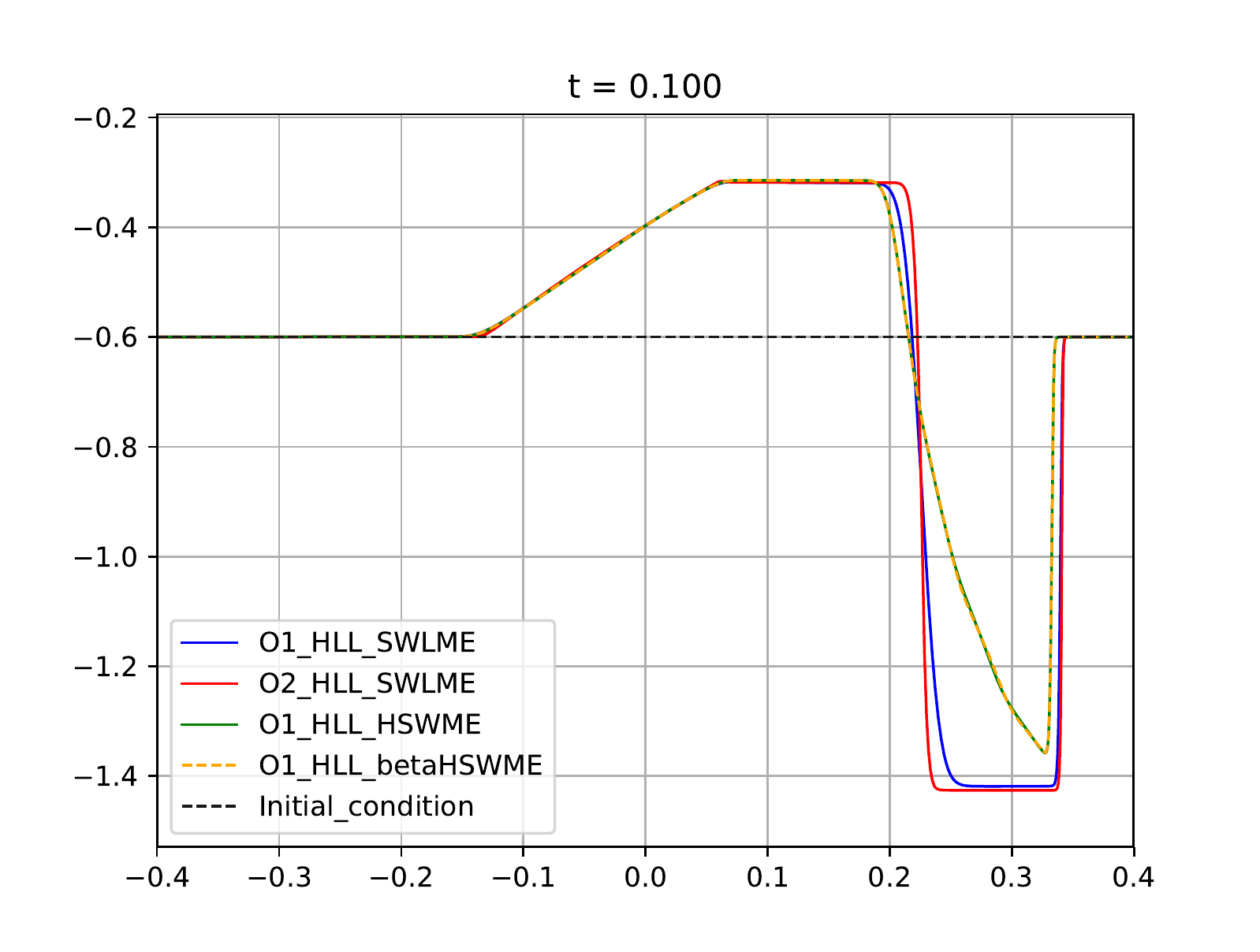}
				\caption{First coefficient $\alpha_1$.}
		\end{subfigure}
		\begin{subfigure}{0.5\textwidth}
				\includegraphics[width=1.1\linewidth]{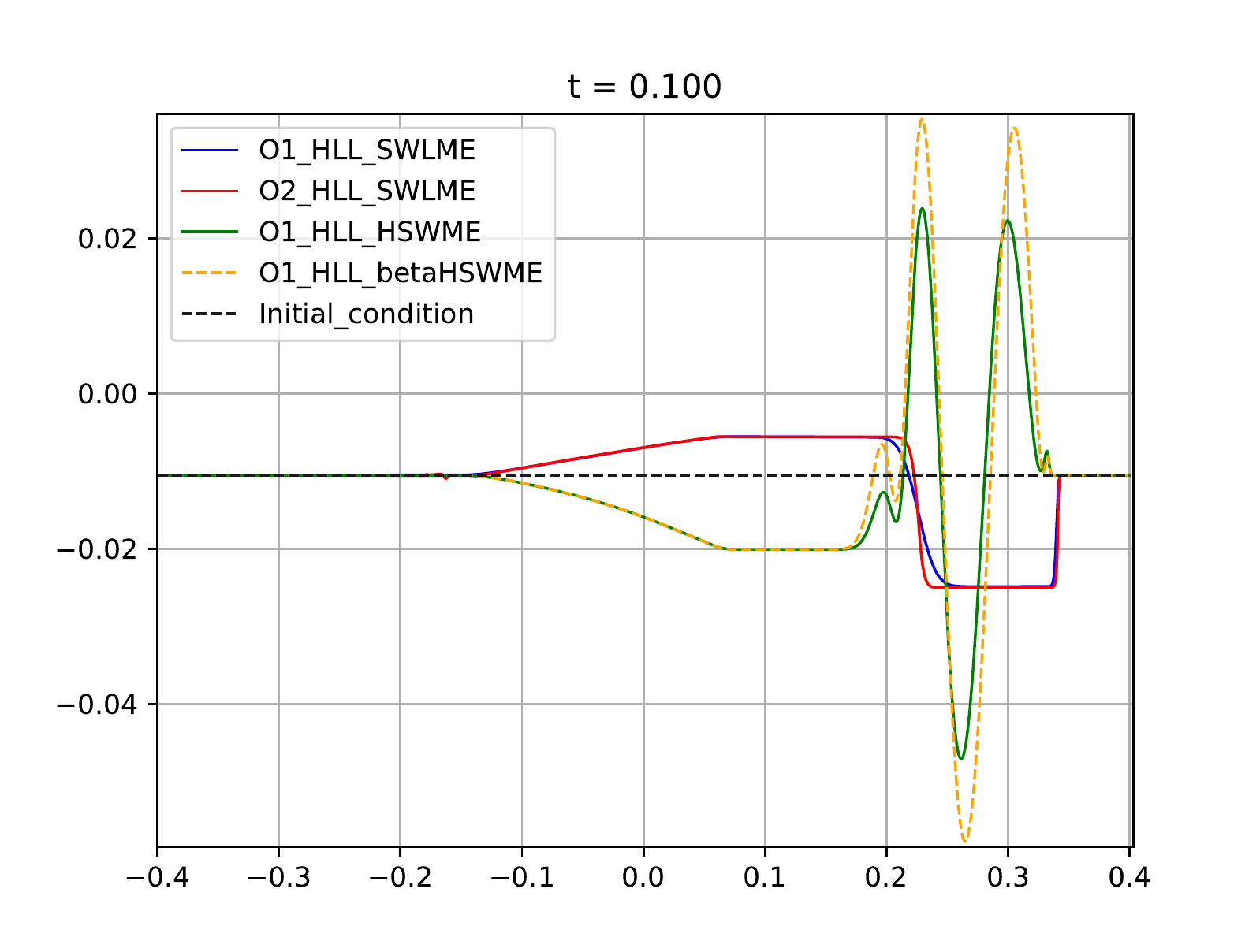}
				\caption{Last coefficient $\alpha_8$.}
		\end{subfigure}
		\caption{Results obtained with the different models for the dam-break with square root velocity profile initial condition (\ref{eq:Test6_a}) and (\ref{eq:Test6_b}) for variables $h,u,\alpha_1$, $\alpha_{8}$ at t=0.1.}
		\label{O1_noWB_SWLME_vs_noWB_HSWME_vs_noWB_betaHSWME_Test6_t01}
\end{figure}

\section{Conclusion}
In this paper, we analytically and numerically investigate steady states of Shallow Water Moment Equations (SWME). After showing that the steady states for the SWME with $N=1$ are extensions of the standard Shallow Water Equations (SWE), we pointed out that the case for arbitrary $N$ poses difficulties due to the loss of hyperbolicity and the structure of the model. The analysis was generalized with the help of a newly derived model called Shallow Water Linearized Moment Equations (SWLME), based on a linearization during the derivation. The concise derivation of the SWLME allowed to prove hyperbolicity and to fully characterize its eigenstructure analytically. This information was used to define a first order and a second order well-balanced numerical scheme preserving the steady states of the model numerically up to machine precision. Numerical results for lake-at-rest, subcritical, and transcritical initial conditions showed the success of the numerical scheme. Additionally, we compared the new SWLME model to other existing shallow water moment models, obtaining very similar solutions for the standard dam-break test. The solution for a more complex velocity profile seems more stable with the new SWLME model while existing models show emerging instabilities. 

The current work is a major step towards a better understanding of shallow water moment models and opens up many possibilities for future work and applications. Next steps could be a detailed stability analysis of the models including the right hand side friction terms, which were neglected in this paper, or the design of proper implicit numerical scheme for potentially stiff friction terms. An extension towards well-balanced schemes of higher-order is possible following the construction of the second order scheme in this paper.

\section*{Acknowledgements}

The authors are thankful to Manuel J. Castro D\'iaz and Carlos Par\'es for the useful suggestions and comments on this work.

This research has been partially supported by the European Union's Horizon 2020 research and innovation program under the Marie Sklodowska-Curie grant agreement no. 888596. J. Koellermeier is a postdoctoral fellow in fundamental research of the Research Foundation - Flanders (FWO), funded by FWO grant no. 0880.212.840. Ernesto Pimentel acknowledges financial support from the Spanish Government-FEDER funded project MEGAFLOW (RTI2018-096064-B-C21), the Junta de Andaluc\'ia-FEDER-University of Málaga funded project UMA18-Federja-161.

\appendix
\section{HSWME and $\beta$-HSWME models}\label{app}
The HSWME and $\beta$-HSWME models are derived and explicitly given in \cite{Koellermeier2020}. 
For our numerical schemes, we can write these two models in the form \eqref{FBSsystem} where the conservative flux is given by
$$F^{HSWME}(U)=F^{\beta HSWME}(U) = \left(\begin{array}{c}
     hu_m  \\
     hu_m^2 + g\frac{h^2}{2}+\frac{1}{3}h\alpha_{1}^{2} \\
     2hu_m\alpha_1 \\
     \frac{2}{3}h\alpha_{1}^{2} \\
     0 \\
     \vdots \\
     0
\end{array}\right),$$
the non-conservative matrix is given by 
$$B^{HSWME}(U) = \begin{pmatrix}
    0 & 0 & & & &\\
    0 & 0 & & & &  \\
     & & -u_m & \frac{3}{5}\alpha_{1}& & & \\
     & & -\alpha_1 & u_m & \frac{4}{7}\alpha_1 & \\
     & & & \frac{2}{5}\alpha_1 & \ddots & \ddots & \\
     & & & & \ddots & \ddots & \frac{N+1}{2N+1}\alpha_1\\
     & & & & & \frac{N-1}{2N-1}\alpha_1 & u_m 
    \end{pmatrix},
$$
$$B^{\beta HSWME}(U) = \begin{pmatrix}
    0 & 0 & & & &\\
    0 & 0 & & & &  \\
     & & -u_m & \frac{3}{5}\alpha_{1}& & & \\
     & & -\alpha_1 & u_m & \frac{4}{7}\alpha_1 & \\
     & & & \frac{2}{5}\alpha_1 & \ddots & \ddots & \\
     & & & & \ddots & \ddots & \frac{N+1}{2N+1}\alpha_1\\
     & & & & & \beta_{N} +\frac{N-1}{2N-1}\alpha_1 & u_m 
    \end{pmatrix},
$$
with $\beta_{N}= \frac{N^2 -N}{2N^2 + N - 1} \alpha_1$ the parameter of the $\beta$-HSWME model. 
The source term $S$ is the same as for the SWLME model. 

The respective terms for the generalized Roe scheme from \eqref{A_Roe}, \eqref{J_Roe}, \eqref{B_Roe} and \eqref{S_Roe} can be obtained by:
$$J_{i+\frac{1}{2}}^{HSWME} = J_{i+\frac{1}{2}}^{\beta HSWME} = \frac{\partial F}{\partial U}(h_{R}, u_{m,R}, \alpha_{1,R}, ..., \alpha_{N,R}),$$
using \eqref{J_Roe_r}, and 
$$ B_{i+\frac{1}{2}}^{HSWME} = B^{HSWME}(u_{m,b}, \alpha_{1,b}), \quad B^{\beta HSWME}_{i+\frac{1}{2}} = B^{\beta HSWME}(u_{m,b}, \alpha_{1,b}),$$
where the Roe averages are given by 
$$
    u_{m,b} = \begin{cases}
	\frac{h_{r}^{2}u_{m,r}+h_{l}^{2}u_{m,l}+h_{l}h_{r}\left[(u_{m,l}-u_{m,r})log\left(\frac{h_{r}}{h_{l}}\right)-(u_{m,r}+u_{m,l})\right]}{(h_{r}-h_{l})^{2}} & \text{if} \ \ h_{r} \neq h_{l}, \\
	\frac{u_{m,r} + u_{m,l}}{2} & \text{if} \ \ h_{r} = h_{l},
	\end{cases}
	$$
and
$$
    \alpha_{1,b} = \begin{cases}
	\frac{h_{r}^{2}\alpha_{1,r}+h_{l}^{2}\alpha_{1,l}+h_{l}h_{r}\left[(\alpha_{1,l}-\alpha_{1,r})log\left(\frac{h_{r}}{h_{l}}\right)-(\alpha_{1,r}+\alpha_{1,l})\right]}{(h_{r}-h_{l})^{2}} & \text{if} \ \ h_{r} \neq h_{l}, \\
	\frac{\alpha_{1,r} + \alpha_{1,l}}{2} & \text{if} \ \ h_{r} = h_{l}.
	\end{cases}
$$

\bibliographystyle{plain}
\bibliography{references}

\begin{thebibliography}{10}

\bibitem{alcrudo2001exact}
F.~Alcrudo and F.~Benkhaldoun.
\newblock Exact solutions to the {R}iemann problem of the shallow water
  equations with a bottom step.
\newblock {\em Computers \& Fluids}, 30(6):643--671, 2001.

\bibitem{audusse2004fast}
E.~Audusse, F.~Bouchut, M.-O. Bristeau, R.~Klein, and B.~Perthame.
\newblock A fast and stable well-balanced scheme with hydrostatic
  reconstruction for shallow water flows.
\newblock {\em SIAM Journal on Scientific Computing}, 25(6):2050--2065, 2004.

\bibitem{berberich2019high}
J.~P. Berberich, P.~Chandrashekar, and C.~Klingenberg.
\newblock High order well-balanced finite volume methods for multi-dimensional
  systems of hyperbolic balance laws.
\newblock {\em arXiv preprint arXiv:1903.05154}, 2019.

\bibitem{bermudez1994upwind}
A.~Bermudez and M.~E. V\'azquez-Cend\'on.
\newblock Upwind methods for hyperbolic conservation laws with source terms.
\newblock {\em Computers \& Fluids}, 23(8):1049--1071, 1994.

\bibitem{bernetti2008exact}
R.~Bernetti, V.~A. Titarev, and E.~F. Toro.
\newblock Exact solution of the {R}iemann problem for the shallow water
  equations with discontinuous bottom geometry.
\newblock {\em Journal of Computational Physics}, 227(6):3212--3243, 2008.

\bibitem{Cai2013}
Z.~Cai, Y.~Fan, and R.~Li.
\newblock Globally hyperbolic regularization of {G}rad's moment system in one
  dimensional space.
\newblock {\em Commun. Math. Sci.}, 11(2):547--571, 2013.

\bibitem{canestrelli2009well}
A.~Canestrelli, A.~Siviglia, M.~Dumbser, and E.~F. Toro.
\newblock Well-balanced high-order centred schemes for non-conservative
  hyperbolic systems. {A}pplications to shallow water equations with fixed and
  mobile bed.
\newblock {\em Advances in Water Resources}, 32(6):834--844, 2009.

\bibitem{castro2007well}
M.~J. Castro-D{\'\i}az, T.~Chac{\'o}n~Rebollo, E.~D. Fern{\'a}ndez-Nieto, and
  C.~Par\'es.
\newblock On well-balanced finite volume methods for nonconservative
  nonhomogeneous hyperbolic systems.
\newblock {\em SIAM Journal on Scientific Computing}, 29(3):1093--1126, 2007.

\bibitem{castro2012class}
M.~J. Castro-D{\'\i}az and E.~D. Fern{\'a}ndez-Nieto.
\newblock A class of computationally fast first order finite volume solvers:
  {PVM} methods.
\newblock {\em SIAM Journal on Scientific Computing}, 34(4):A2173--A2196, 2012.

\bibitem{castro2008well}
M.~J. Castro-D\'iaz, J.~M. Gallardo, J.~A. L{\'o}pez-Garc{\'i}a, and
  C.~Par{\'e}s.
\newblock Well-balanced high order extensions of {G}odunov's method for
  semilinear balance laws.
\newblock {\em SIAM Journal on Numerical Analysis}, 46(2):1012--1039, 2008.

\bibitem{diaz2013high}
M.~J. Castro-D{\'\i}az, J.~A. L{\'o}pez-Garc{\'i}a, and C.~Par{\'e}s.
\newblock High order exactly well-balanced numerical methods for shallow water
  systems.
\newblock {\em Journal of Computational Physics}, 246:242--264, 2013.

\bibitem{castro2017well}
M.~J. Castro-D\'iaz, T.~Morales~de Luna, and C.~Par{\'e}s.
\newblock Well-balanced schemes and path-conservative numerical methods.
\newblock In {\em Handbook of Numerical Analysis}, volume~18, pages 131--175.
  Elsevier, 2017.

\bibitem{castro2020well}
M.~J. Castro-D\'iaz and C.~Par{\'e}s.
\newblock Well-balanced high-order finite volume methods for systems of balance
  laws.
\newblock {\em Journal of Scientific Computing}, 82(2):48, 2020.

\bibitem{rebollo2003family}
T.~Chac{\'o}n~Rebollo, A.~Dominguez~Delgado, and E.~D. Fern{\'a}ndez-Nieto.
\newblock A family of stable numerical solvers for the shallow water equations
  with source terms.
\newblock {\em Computer methods in applied mechanics and engineering},
  192(1-2):203--225, 2003.

\bibitem{christen2010ramms}
M.~Christen, J.~Kowalski, and P.~Bartelt.
\newblock {RAMMS}: Numerical simulation of dense snow avalanches in
  three-dimensional terrain.
\newblock {\em Cold Regions Science and Technology}, 63(1-2):1--14, 2010.

\bibitem{craster2009dynamics}
R.~V. Craster and O.~K. Matar.
\newblock Dynamics and stability of thin liquid films.
\newblock {\em Reviews of modern physics}, 81(3):1131, 2009.

\bibitem{Garres2020}
J.~Garres-Díaz, T.~Morales de~Luna, M.~J. Castro, and J.~Koellermeier.
\newblock Shallow water moment models for bedload transport problems.
\newblock {\em submitted}.

\bibitem{Gear2003ProjectiveMF}
C.~W. Gear and I.~Kevrekidis.
\newblock Projective methods for stiff differential equations: Problems with
  gaps in their eigenvalue spectrum.
\newblock {\em SIAM J. Sci. Comput.}, 24:1091--1106, 2003.

\bibitem{gottlieb1998total}
S.~Gottlieb and C.~W. Shu.
\newblock Total variation diminishing {R}unge-{K}utta schemes.
\newblock {\em Mathematics of Computation}, 67(221):73--85, 1998.

\bibitem{Pimentel2020}
J.~Koellermeier and E.~Pimentel-Garc\'ia.
\newblock Software for: Steady states and well-balanced schemes for shallow
  water moment equations with topography.
\newblock Zenodo, 2020.
\newblock http://doi.org/10.5281/zenodo.4274991.

\bibitem{Koellermeier2020}
J.~Koellermeier and M.~Rominger.
\newblock Analysis and numerical simulation of hyperbolic shallow water moment
  equations.
\newblock {\em Commun. Comp. Phys.}, 28((3)):1038--1084, 2020.

\bibitem{Koellermeier2016a}
J.~Koellermeier and M.~Torrilhon.
\newblock Simplified hyperbolic moment equations.
\newblock In {\em Proceedings of the 16th International Conference on
  Hyperbolic Problems}, 2016.

\bibitem{kowalski2018moment}
J.~Kowalski and M.~Torrilhon.
\newblock Moment approximations and model cascades for shallow flow.
\newblock {\em Communications in Computational Physics}, 25, 2019.

\bibitem{Lafitte2010}
P.~Lafitte and G.~Samaey.
\newblock Asymptotic-preserving projective integration schemes for kinetic
  equations in the diffusion limit.
\newblock {\em SIAM Journal on Scientific Computing}, 34:A579--A602, 2012.

\bibitem{lefloch2007riemann}
P.~G. LeFloch and M.~D. Thanh.
\newblock The {R}iemann problem for the shallow water equations with
  discontinuous topography.
\newblock {\em Communications in Mathematical Sciences}, 5(4):865--885, 2007.

\bibitem{noelle2006well}
S.~Noelle, N.~Pankratz, G.~Puppo, and J.~R. Natvig.
\newblock Well-balanced finite volume schemes of arbitrary order of accuracy
  for shallow water flows.
\newblock {\em Journal of Computational Physics}, 213(2):474--499, 2006.

\bibitem{pares2006numerical}
C.~Par{\'e}s.
\newblock Numerical methods for nonconservative hyperbolic systems: a
  theoretical framework.
\newblock {\em SIAM Journal on Numerical Analysis}, 44(1):300--321, 2006.

\bibitem{pares2004well}
C.~Par{\'e}s and M.~J. Castro-D\'iaz.
\newblock On the well-balance property of {R}oe's method for nonconservative
  hyperbolic systems. {A}pplications to shallow-water systems.
\newblock {\em ESAIM: mathematical modelling and numerical analysis},
  38(5):821--852, 2004.

\bibitem{rosatti2010riemann}
G.~Rosatti and L.~Begnudelli.
\newblock The {R}iemann problem for the one-dimensional, free-surface shallow
  water equations with a bed step: {T}heoretical analysis and numerical
  simulations.
\newblock {\em Journal of Computational Physics}, 229(3):760--787, 2010.

\bibitem{russo2008high}
G.~Russo and A.~Khe.
\newblock High order well-balanced finite volume schemes for systems of balance
  laws.
\newblock 2008.

\bibitem{schijf1953theoretical}
J.~B. Schijf and Johan~C. Sch{\"o}nfled.
\newblock Theoretical considerations on the motion of salt and fresh water.
\newblock In {\em Minnesota International Hydraulic Convention}. IAHR, 1953.

\bibitem{Serre1999}
D.~Serre.
\newblock {\em Systems of Conservation Laws 1: Hyperbolicity, Entropies, Shock
  Waves}.
\newblock Cambridge University Press, Cambridge, 1999.

\bibitem{toumi1992weak}
I.~Toumi.
\newblock A weak formulation of {R}oe's approximate {R}iemann solver.
\newblock {\em Journal of Computational Physics}, 102(2):360--373, 1992.

\bibitem{van1973towards}
B.~Van~Leer.
\newblock Towards the ultimate conservative difference scheme {I}. the quest of
  monotonicity.
\newblock In {\em Proceedings of the Third International Conference on
  Numerical Methods in Fluid Mechanics}, pages 163--168. Springer, 1973.

\end{thebibliography}

\end{document}